\documentclass[11pt]{amsart}
\usepackage{amsmath,amssymb,amsthm}
\usepackage{color,graphics,srcltx}
\usepackage{enumitem}

\newtheorem{theorem}{Theorem}[section]
\newtheorem{lemma}[theorem]{Lemma}
\newtheorem{corollary}[theorem]{Corollary}

\newtheorem{remark}[theorem]{Remark}

\theoremstyle{definition}
\newtheorem{de}[theorem]{Definition}
\newtheorem{example}[theorem]{Example}

\newcommand\C{\mathcal C}
\newcommand\N{\mathcal N}

\newcommand\ZZ{\mathbb Z}
\DeclareMathOperator{\PSL}{{\mathrm{PSL}}}
\DeclareMathOperator{\SL}{{\mathrm{SL}}}
\DeclareMathOperator{\GL}{{\mathrm{GL}}}
\DeclareMathOperator{\GF}{{\mathrm{GF}}}
\DeclareMathOperator{\Aut}{{\mathrm{Aut}}}
\newcommand\norml{\trianglelefteq}

\usepackage{color}

\begin{document}
\title{Realizability problem for commuting graphs}
\author{Michael Giudici}
\address[Michael Giudici]{Centre for the Mathematics of Symmetry and Computation, The University of Western Australia, 35 Stirling Highway, Crawley, WA 6009, Australia.}
\thanks{This paper is part of an Australian  Research Council Discovery Project (DP 120100446).}
\email{michael.giudici@uwa.edu.au}

\author{Bojan Kuzma}
\address[Bojan Kuzma]{${}^1$University of Primorska, Glagolja\v ska 8, SI-6000 Koper,
Slovenia, \hbox{and} ${}^2$IMFM, Jadranska 19, SI-1000 Ljubljana,
Slovenia}
\thanks{Work of the second author is supported in part by the Slovenian Research Agency (Research
cooperation Slovenia - Australia)}\email{bojan.kuzma@famnit.upr.si}

\subjclass{Primary 05C25; Secondary 20M99; 20D99}
\begin{abstract}
We investigate properties which ensure that a given finite graph is the commuting graph of a  group or
semigroup.
We show that all graphs on at least two vertices such that no vertex is adjacent to all other
vertices is the commuting graph of some semigroup. Moreover, we obtain a complete classification of the
graphs with an isolated vertex or edge that are the commuting graph of a group and the cycles that are the
commuting graph of a centrefree semigroup.

\keywords{Finite groups; Finite semigroups;  Commuting graph; Classification problem.}
\end{abstract}
\maketitle
\section{Introduction and Preliminaries}

Let $S$ be a semigroup with centre $Z(S):=\{x\in S\mid xs=sx\hbox{ for all } s\in S\}$. The \emph{commuting
graph} $\Gamma(S)$ is the simple graph with vertex set $S\backslash Z(S)$  and two distinct vertices $x$ and
$y$ are adjacent if and only  $xy=yx$. This notion can be traced back at least as far as the paper by Brauer
and Fowler \cite{brauer-fowler} who used commuting graphs to study the distances between involutions in
finite groups (it should be mentioned, however,  that the vertices of their graph consisted of all
nonidentity elements).

Solomon and Woldar~\cite{Solomn-Woldar} showed that a commuting graph distinguishes  finite simple
nonabelian groups. More precisely, if the commuting graph of  a group is isomorphic to the commuting
graph of some finite  nonabelian simple group then the two groups are isomorphic. In general, commuting
graphs do not distinguish groups  as the commuting graph of $Q_8$ and $D_8$ both consist of three
disjoint edges.

In the present paper we will be concerned with the following inverse problem for commuting graphs: Given a
simple graph $\Gamma$, can we find a group or semigroup whose commuting graph is isomorphic to $\Gamma$? We
say that such a graph $\Gamma$ is \emph{realizable} over groups or over semigroups, respectively. Part of
the motivation behind the present study are the results of Pisanski \cite{pisanski} who  showed that any
graph on $n$ vertices is isomorphic to the commuting graph of a certain subset of  $S_3^n$, a direct product
of $n$ copies of the symmetric group $S_3$. However, this subset is in general not a semigroup as it may
not be closed under multiplication. Unaware of \cite{pisanski}, the second author with
collaborators~\cite{ambrozie-bracic-kuzma-muller} proved, among other things, that every finite simple graph
is isomorphic to the commuting graph of a certain subset of sufficiently large complex matrices. It was
further shown in~\cite{ambrozie-bracic-kuzma-muller} that for any given size $n$ of matrices there exists a
finite graph which is not realizable as a commuting graph of a subset of $n$--by--$n$ matrices.

Ara\'ujo, Kinyon and Konieczny \cite{AKK} posed the problem of classifying the commuting graphs of
semigroups. We will  give a complete answer to this question in Theorem \ref{thm:semigroup} by showing that
any finite graph on at least two vertices which does not have a vertex  adjacent to all other vertices is
the commuting graph of a semigroup. This gives an alternative proof for the result of Ara\'ujo, Kinyon and
Konieczny \cite{AKK} that for every integer $n$ there is a semigroup whose commuting graph has diameter $n$.
The semigroup in their proof has no centre while the semigroup in our construction to prove Theorem
\ref{thm:semigroup} has a  centre of order two.  We undertake some investigations of graphs that are
realizable over centrefree semigroups with our main result being the following.

\begin{theorem}\label{thm:cycle-semigps}
A cycle is the commuting graph of a centrefree semigroup if and only if its length is divisible by four.
\end{theorem}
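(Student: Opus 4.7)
The plan is to derive strong structural constraints from the centrefree-cycle hypothesis (to show $4\mid n$ is necessary) and, separately, to exhibit explicit semigroups realising $C_{4k}$.

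For necessity, suppose $S$ is centrefree with $\Gamma(S)\cong C_n$, $n\ge 4$, and label the cycle vertices $v_0,\dots,v_{n-1}$; then $|S|=n$ and each centraliser $C_S(v_i)=\{v_{i-1},v_i,v_{i+1}\}$ is a three-element subsemigroup. I would first show every $v_i$ is idempotent: $v_i^2\in C_S(v_i)$, and if $v_i^2=v_{i+1}$ then
\[
v_{i-1}v_{i+1}=v_{i-1}v_i^2=(v_{i-1}v_i)v_i=(v_iv_{i-1})v_i=v_i(v_{i-1}v_i)=v_i^2v_{i-1}=v_{i+1}v_{i-1},
\]
using that adjacent $v_{i-1},v_i$ commute; this contradicts non-adjacency of $v_{i-1}$ and $v_{i+1}$, and the case $v_i^2=v_{i-1}$ is symmetric. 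So $S$ is a band with the natural partial order $x\le y\iff xy=yx=x$. For adjacent $v_i,v_{i+1}$, the product $v_iv_{i+1}$ commutes with both and thus lies in $C_S(v_i)\cap C_S(v_{i+1})=\{v_i,v_{i+1}\}$, so adjacent vertices are comparable, while non-adjacent vertices do not commute and are therefore incomparable. Transitivity of $\le$ then forbids two consecutive $<$-steps along the cycle (which would force comparability of a non-adjacent pair), so the vertices alternate between ``tops'' and ``bottoms'' in the natural order, forcing $n=2m$ to be even.

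To rule out $m$ odd, write the tops as $t_0,\dots,t_{m-1}$ and the bottoms as $b_0,\dots,b_{m-1}$ with $t_i$ adjacent to $b_i,b_{i+1}$. Each non-adjacent product lies in an intersection of principal one-sided ideals, for instance $t_it_j\in t_iS\cap St_j$ and $t_ib_j\in t_iS\cap Sb_j$. Computing these ideals from the order data one checks that keeping them ``minimal'' (only the forced three elements around each $t_i$ or $b_j$) would give $t_it_j=t_jt_i$, contradicting non-commutativity; hence an extra left- or right-fixed point must be introduced, and by associativity each such extra forces further extras at neighbouring vertices of the cycle. Tracking this chain of forced extensions all the way round $C_{2m}$ yields a $\mathbb{Z}/2$-valued consistency condition whose total class equals the parity of $m$, so $m$ must be even, giving $4\mid n$. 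This parity-$4$ argument is the principal technical difficulty.

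For sufficiency, the base case $n=4$ is handled by the four-element band $\{b_0,b_1,t_0,t_1\}$ with both $t_i$ above both $b_j$ in the natural order, adjacent-pair products dictated by the order ($t_ib_j=b_j$), and the non-adjacent pairs $\{b_0,b_1\}$ and $\{t_0,t_1\}$ multiplied by the left-zero rule ($xy=x$); direct verification gives associativity, centrefreeness, and $\Gamma(S)\cong C_4$. For general $n=4k$ with $k\ge 2$ the naive left-zero rule breaks associativity (as one sees already when attempting $n=6$), so a finer construction is required; I would realise $S$ inside a transformation semigroup on a suitably labelled set, defining each $v_i$ as a partial projection adapted to its local $C_4$-neighbourhood and using the factor of $4$ in $n$ to choose the off-cycle products so that associativity holds while non-adjacent pairs stay non-commuting, then verify that the resulting semigroup is a centrefree band with commuting graph $C_{4k}$.
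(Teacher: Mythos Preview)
Your opening is sound: the idempotence argument and the use of the natural band order to produce alternating tops and bottoms are exactly Lemma~\ref{lem:labelling} and Corollary~\ref{cor:oddcycle} rephrased, and they correctly give $2\mid n$. The real gaps are the two steps you yourself flag as hard. For necessity of $4\mid n$, you assert a ``$\mathbb{Z}/2$-valued consistency condition'' obtained by ``tracking forced extensions'' around the cycle, but you never say what the invariant is or prove that it alternates; the claims about minimal one-sided ideals forcing $t_it_j=t_jt_i$ and about ``extra fixed points'' propagating are not substantiated. In the paper this is the bulk of the argument (Steps~1--8 of Theorem~\ref{lem:cycle-semigps}): one first shows distance-two products satisfy $xy\in\{x,y\}$, then proves by a nontrivial induction that, after an anti-isomorphism normalisation, every bottom is \emph{left-absorbing}, i.e.\ $x_{2i}y=x_{2i}$ for all $y\in S$; only with this in hand can one show $x_{2i+1}x_{2j}\in\{x_{2i},x_{2i+2}\}$ and then, via a minimal-counterexample argument using the auxiliary identity $x_{2i+1}x_{2j+1}=x_{2i+1}$, that $x_{2i+1}x_{2j}\ne x_{2i+1}x_{2j+2}$ for \emph{every} $i,j$, so that $j\mapsto x_1x_{2j}$ genuinely alternates between $x_0$ and $x_2$ and forces $k$ even. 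Your one-sided-ideal sketch neither isolates this alternating quantity nor supplies the left-absorption step on which it rests.

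For sufficiency, your $C_4$ example is correct, but for $k\ge2$ you only promise a transformation-semigroup realisation ``adapted to the local $C_4$-neighbourhood''; this is not a construction, and you verify neither associativity nor the commuting pattern. In fact the necessity analysis already determines the multiplication uniquely up to anti-isomorphism, namely $x_{2i}y=x_{2i}$, $x_{2i+1}x_{2j+1}=x_{2i+1}$, $x_{2i+1}x_{2i+4j}=x_{2i}$ and $x_{2i+1}x_{2i+4j+2}=x_{2i+2}$ with subscripts read modulo $4k$, and the paper simply writes this down and checks it directly. A vague appeal to partial projections does not substitute for that verification.
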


Commuting graphs of groups are much more restrictive.  For example, a GAP \cite{GAP4} computation
shows that the smallest group with a connected commuting graph has order 32. There are seven such
groups; each has a centre of order two, and hence these yield connected commuting graphs on 30
vertices. If $G$ were a group with connected commuting graph on less then 30 vertices then since $|Z(G)|$ divides $|G|$ we would have  $|G|<60$ but a GAP  calculation shows that
no such group exists.

There are also many other restrictions on the possible graphs realizable as the commuting graph of a group.
Indeed, even though there is no bound for the diameter of the commuting graph of a group
\cite{Giudici_Parker}, Morgan and Parker \cite{Morgan-Parker} have shown that every connected component of
the commuting graph of a group with trivial centre has diameter at most 10. Moreover, they showed that if
$\Delta$ is a connected component of such a graph for a nonsoluble group  with $\Delta$  containing no
involutions, then $\Delta$ must be a clique. Furthermore, Afkhami, Farrokhi and Khashyarmanesh \cite{AFK}
have shown that only 17 groups have a planar commuting graph. This result was also obtained by Das and
Nongsiang \cite{DasNong}, who further proved that for a given genus $g$ there are only finitely many groups
whose commuting graph has genus $g$. In addition,  \cite{DasNong}   also shows that only three groups have
triangle free commuting graphs.

Section \ref{sec:groups} collects some simple observations about the structure of commuting graphs of
groups. In particular, Lemma \ref{lem:edgeintriangle} shows that any non-isolated edge of the commuting
graph of a group must lie in a triangle. We then go on to determine the structure of commuting graphs of
groups that contain an isolated vertex or edge. This work is summarised in the following two theorems.

\begin{theorem}
\label{thm1}
Let $G$ be a group and suppose that $\Gamma(G)$ has an isolated vertex. Then $\Gamma(G)$ has exactly $|G|/2$ isolated vertices and the remaining vertices form a clique.
\end{theorem}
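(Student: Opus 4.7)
The plan is first to extract from the existence of a single isolated vertex enough information to force $Z(G)$ to be trivial, then to identify the structural obstruction that produces a Frobenius-type decomposition of $G$.

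Let $x$ be an isolated vertex of $\Gamma(G)$. Then no non-central element other than $x$ itself commutes with $x$, so $C_G(x)=Z(G)\cup\{x\}$ and hence $|C_G(x)|=|Z(G)|+1$. Since $Z(G)$ is a subgroup of $C_G(x)$, Lagrange's theorem forces $|Z(G)|$ to divide $|Z(G)|+1$, so $Z(G)=\{e\}$. Consequently $C_G(x)=\{e,x\}$, which means $x$ is an involution and $H:=\langle x\rangle$ is a subgroup of order $2$ whose normaliser equals its centraliser, namely $H$ itself.

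Next I would observe that distinct conjugates of $H$ intersect trivially: if $x=gxg^{-1}$ then $g\in C_G(x)=H$. Thus $H$ is a malnormal subgroup of $G$, and Frobenius's theorem applies to give a normal subgroup $K\norml G$ of order $|G|/2$ with $G=K\rtimes H$, disjoint from all conjugates of $H$ apart from the identity. Because $x$ has order $2$ and acts fixed-point-freely on $K\setminus\{e\}$ by conjugation, the standard argument (the map $k\mapsto k^{-1}\cdot x k x^{-1}$ is a bijection, which forces $x$ to invert every element of $K$) shows that $K$ is abelian.

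Finally I would assemble the two halves. Every element of $G\setminus K$ lies in some conjugate of $H\setminus\{e\}$, so $G\setminus K$ consists of exactly $|G|/2$ conjugates of $x$; each of these inherits a centraliser of order $2$ and is therefore an isolated vertex of $\Gamma(G)$. On the other hand, every element of $K\setminus\{e\}$ commutes with the remaining elements of $K\setminus\{e\}$, and since $G$ is non-abelian (as $Z(G)=\{e\}\neq G$) we have $|G|\geq6$ and hence $|K|\geq 3$, so these $|G|/2-1$ vertices are pairwise adjacent and form a clique in $\Gamma(G)$, accounting for all remaining vertices. The main obstacle is the verification that $H$ is a Frobenius complement with abelian kernel; once Frobenius's theorem is invoked, everything else is bookkeeping.
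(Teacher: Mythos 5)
Your proof is correct, but it reaches the index-two normal subgroup by a different route than the paper. You observe that $H=\langle x\rangle$ is malnormal (a Frobenius complement of order $2$) and invoke Frobenius's theorem to obtain the kernel $K$ of order $|G|/2$; the paper instead constructs this subgroup bare-handed: it shows that the products $v_iv_j$ of distinct conjugates of the isolated involution are never isolated, counts that for fixed $i$ the $n-1$ products $v_iv_j$ are distinct and exhaust $G\setminus(\{1\}\cup\{v_1,\dots,v_n\})$, and deduces directly that this complement together with $1$ is closed under multiplication and inversion. The paper's argument is self-contained and elementary (it is essentially the classical character-free proof of Frobenius's theorem in the special case of a complement of order $2$), whereas your appeal to the general Frobenius theorem imports character theory for a case where it is not needed --- a legitimate but heavier tool. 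Both arguments then use the same fact that a fixed-point-free automorphism of order two is inversion, so the kernel is abelian; you sketch the standard bijection argument while the paper cites Gorenstein. Your explicit check that $|K|\ge 3$, so that the clique vertices are genuinely non-isolated and the count of isolated vertices is exactly $|G|/2$, is a detail the paper leaves implicit.
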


\begin{theorem}\label{thm2}
Let $G$ be a group and suppose that $\Gamma(G)$ has an isolated edge. Then $\Gamma(G)$ consists of isolated edges, cliques and at most one noncomplete connected component. Moreover, such a component has diameter at most 5.
\end{theorem}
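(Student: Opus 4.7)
The plan is to exploit the isolated edge $\{x,y\}$ to force $|Z(G)|\le 2$, and then to leverage Lemma \ref{lem:edgeintriangle} to constrain the remaining connected components of $\Gamma(G)$.

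First I would observe that because $\{x,y\}$ is an isolated edge, the only non-central element commuting with $x$ is $y$, so $C_G(x)=Z(G)\cup\{x,y\}$ and hence $|C_G(x)|=|Z(G)|+2$. Since $Z(G)$ is a subgroup of $C_G(x)$, Lagrange's theorem forces $|Z(G)|\in\{1,2\}$. In both subcases $C_G(x)=C_G(y)$ is abelian of order at most $4$, and the possible orders of $x,y$ are very restricted: if $|Z(G)|=1$ then $x$ has order $3$ and $y=x^{-1}$, while if $|Z(G)|=2$ then $x$ has order $2$ or $4$ and $y$ lies in the coset $xZ(G)$.

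Next I would analyse the connected components. Let $\Delta$ be a component of $\Gamma(G)$. If some edge of $\Delta$ is not isolated in $\Delta$, then by Lemma \ref{lem:edgeintriangle} it lies in a triangle inside $\Delta$. Assume $\Delta$ is not a clique and pick a shortest non-edge $\{u,w\}\subset\Delta$; then there is a vertex $v\in\Delta$ with $u\sim v\sim w$ but $u\not\sim w$, so $C_G(v)$ is a non-abelian subgroup of $G$ containing $Z(G)\cup\{u,v,w\}$. The heart of the argument is to show that any other non-clique component $\Delta'$ would yield a vertex $v'$ of the same ``non-abelian-centraliser'' type; one then argues that the smallness of $Z(G)$ leaves too little room for the subgroups $\langle Z(G),u,v,w\rangle$ and $\langle Z(G),u',v',w'\rangle$ to remain incident to disjoint components of $\Gamma(G)$, producing a common commuting element that witnesses $\Delta=\Delta'$.

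The diameter bound of $5$ would then be established by a direct construction: given two vertices $a,b$ of the unique non-clique component, one builds a short commuting path from $a$ to $b$ through vertices of $C_G(a)$ and $C_G(b)$, using the triangle-enrichment provided by Lemma \ref{lem:edgeintriangle} to glue together local triangles, and exploiting the fact that $|Z(G)|\le 2$ to rule out geodesics of length greater than $5$. The main obstacle will be the uniqueness of the non-clique component: the isolated edge itself is generally ``far'' from any such component, so linking them requires a delicate use of $|Z(G)|\le 2$ and Lemma \ref{lem:edgeintriangle}, most likely via a case split on $|Z(G)|$ and on the orders of the relevant elements $x,y,v,v'$.
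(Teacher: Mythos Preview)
Your opening reduction is correct and matches the paper: from the isolated edge $\{x,y\}$ one gets $|\C_G(x)|=|Z(G)|+2$, hence $|Z(G)|\in\{1,2\}$, and the possible orders of $x,y$ are exactly as you list (this is Lemma~\ref{lem:centre-in-case-of-an-edge}).  After that point, however, your plan and the paper's diverge sharply, and your sketch does not supply a working argument.

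The paper does \emph{not} prove Theorem~\ref{thm2} by any direct graph-theoretic reasoning about centralisers in separate components.  Instead it proceeds by classifying, up to isomorphism, all finite groups whose commuting graph has an isolated edge, and then reading off the graph in each case.  When $|Z(G)|=1$ the isolated edge yields a self-centralising subgroup of order~$3$, and one invokes the Feit--Thompson theorem \cite{Feit-Thompson} together with Mazurov's refinement \cite{Mazurov} to obtain the list in Lemma~\ref{lem:Gtrivialcentre}; the graphs are then computed in Theorem~\ref{thm:14}.  When $|Z(G)|=2$ one has a self-centralising subgroup of order~$4$, and Wong's classification \cite{Wong} gives Lemma~\ref{lem:edgenontrivcent}, with the graphs worked out in Theorem~\ref{thm:edge-center}.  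Both the uniqueness of the non-clique component and the diameter bound of~$5$ drop out of this classification; in particular, diameter~$5$ occurs only for $G\cong\PSL(2,7)$.

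Your proposed route has two genuine gaps.  First, the step ``the smallness of $Z(G)$ leaves too little room for $\langle Z(G),u,v,w\rangle$ and $\langle Z(G),u',v',w'\rangle$ to remain incident to disjoint components'' is not an argument: a group with $|Z(G)|\le 2$ can perfectly well contain two large subgroups meeting only in $Z(G)$, and nothing you have said forces a noncentral element commuting with vertices of both $\Delta$ and $\Delta'$.  Second, Lemma~\ref{lem:edgeintriangle} only tells you that non-isolated edges lie in triangles; it gives no control on geodesic length, and ``$|Z(G)|\le 2$'' by itself carries no diameter information (compare the Morgan--Parker bound of~$10$ in \cite{Morgan-Parker}, which already requires substantial work).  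There is no evident way to reach the sharp bound of~$5$ without knowing that the only obstruction is $\PSL(2,7)$.  To complete the proof you will need to bring in the structural classification results above rather than an ad hoc centraliser argument.
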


More specific information about the groups involved in Theorem \ref{thm1} is given in Theorem \ref{thm:isolated-vertex}, while more details about the graphs and groups involved in Theorem \ref{thm2} are given in Lemma \ref{lem:Gtrivialcentre}, Theorem \ref{thm:14}, Lemma \ref{lem:edgenontrivcent} and Theorem \ref{thm:edge-center}.

\subsubsection{Notation} Given a graph $\Gamma$ we denote the vertex set of $\Gamma$ by $V(\Gamma)$ and the edge set by $E(\Gamma)$. Also, we denote by $|V(\Gamma)|$ the cardinality of the
vertex set of $\Gamma$. Given vertices $x,y\in\Gamma$ we denote by $x\sim y$ the fact that they form
an edge in $\Gamma$.
 The distance,
$d(x,y)$ between connected vertices $x$ and $y$ is the length of a minimal path between $x$ and $y$. We set
$d(x,y)=\infty$ if there is no path from $x$ to $y$. We let $K_n$ denote the complete graph on $n$ vertices.

Unless otherwise stated, all semigroups and groups are written  multiplicatively and the identity in a
group is $1$. Let $|G|$ denote the order of the (semi)group $G$ and, given $g\in G$, let $|g|$ denote
the order of $g$. Let $Z(G)$ be the centre of a (semi)group $G$, let $\C_G(A)$ be the centraliser of the
subset $A$ of a (semi)group $G$ and in the case where $G$ is a group let ${\mathcal N}_G(A):=\{x\in G\mid x^{-1}Ax=A\}$ be its normalizer.
Given elements $x,y\in G$, we denote by $\langle x,y\rangle$ the subgroup generated by $x$ and $y$, so
$\langle x\rangle$ is  the subgroup generated by $x$. By $H\leqslant G$  we denote that $H$ is a
subgroup of a group $G$, and  $\Aut (G)$ denotes the automorphism group of $G$.

Let $S_n$ and $A_n$ be the symmetric group and alternating group on $n$ elements, respectively, let
$\ZZ_p=\ZZ/(p\ZZ)$ be a cyclic group of order $p$, let $\SL(n,p^k)$ be the special linear group of $n\times
n$ matrices with determinant one over the Galois field $\GF(p^k)$, and let
$\PSL(n,p^k)=\SL(n,p^k)/Z(\SL(n,p^k))$ be the projective special linear group.  As usual, given a group
homomorphism $\phi\colon G\to G$, its action on an element $g\in G$ is denoted by $g\phi\in G$.

\section{Realizability over semigroups}

We start with two  basic obstructions that prevent realizability over semigroups.

\begin{lemma}\label{lem:obstruction}
Let $\Gamma$ be a graph. If either
\begin{enumerate}[label={\emph{(\roman*)}}]
 \item $|V(\Gamma)|=1$, or
 \item $\Gamma$ contains a vertex adjacent to all vertices in $V(\Gamma)\backslash\{v\}$,
\end{enumerate}
then $\Gamma$ is not the commuting graph of a semigroup.
\end{lemma}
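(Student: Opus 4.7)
The plan is to exploit the defining feature of the commuting graph that its vertex set is $S\setminus Z(S)$, together with the tautology that every element of $Z(S)$ commutes with every element of $S$. In both parts, I want to derive that the distinguished vertex actually lies in $Z(S)$, which directly contradicts its being a vertex of $\Gamma(S)$.

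First I would set up the common observation: if $v\in S\setminus Z(S)$, then $v$ automatically commutes with itself and with every element of $Z(S)$, so to place $v$ in $Z(S)$ it suffices to show that $v$ commutes with every other element of $S\setminus Z(S)$. This single observation powers both cases.

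For case (i), suppose $\Gamma(S)$ has exactly one vertex $v$. Then $S\setminus Z(S)=\{v\}$, so there is no ``other'' non-central element for $v$ to fail to commute with; by the preceding observation $v$ commutes with all of $S$, contradicting $v\notin Z(S)$. For case (ii), let $v$ be a vertex adjacent to every other vertex. Then by definition of the commuting graph, $v$ commutes with every element of $S\setminus (Z(S)\cup\{v\})$, and as before it commutes with itself and with every element of $Z(S)$; hence $v$ commutes with all of $S$, forcing $v\in Z(S)$, a contradiction.

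There is no real obstacle here: the whole argument hinges on unpacking the definition of $Z(S)$, and both parts reduce to the same one-line observation. Indeed, (i) can be viewed as a degenerate instance of (ii) where the set of ``other vertices'' is empty; I would likely present (ii) first and note that (i) is the vacuous case, or combine them into one paragraph.
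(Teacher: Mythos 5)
Your argument is correct and is essentially the same as the paper's: both proofs observe that $v$ commutes with itself, with all of $Z(S)$, and (by adjacency or vacuously) with every other non-central element, so $v\in Z(S)$, a contradiction. The paper likewise treats (i) and (ii) in a single sentence, just as you suggest.
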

\begin{proof}
Suppose that $\Gamma=\Gamma(S)$ for some semigroup $S$ and that $v\in V(\Gamma)$ such that either
$V(\Gamma)=\{v\}$ or $v$ is adjacent to all vertices in $V(\Gamma)\backslash\{v\}$. Then $v$ commutes with
itself and all elements of $Z(S)\cup \Gamma=S$ so $v\in Z(S)$,  a contradiction.
\end{proof}

In particular, Lemma ~\ref{lem:obstruction} shows that complete graphs are not commuting graphs of
semigroups. However, the obstructions in Lemma ~\ref{lem:obstruction} are the only ones that prevent
realizability over semigroups.

\begin{theorem}\label{thm:semigroup}
Every  finite graph $\Gamma$ with at least two vertices and such that no vertex is adjacent to all other
vertices is the commuting graph of some semigroup $S$ with $|S|=|\Gamma|+2$.
\end{theorem}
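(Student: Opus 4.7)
The plan is to adjoin two formal central symbols $0$ and $a$ to the vertex set and build a semigroup $S = V(\Gamma)\sqcup\{0,a\}$ in which the only ``interesting'' products are the $v_iv_j$ for $v_i,v_j\in V(\Gamma)$; every other product, and every triple product, will collapse to $0$. Concretely, fix a total order $v_1<v_2<\cdots<v_n$ on $V(\Gamma)$; declare $0$ to be a zero of $S$ and declare that $a$ also annihilates everything, i.e.\ $0x=x0=ax=xa=0$ for all $x\in S$ (in particular $a^2=0$); and for $v_i,v_j\in V(\Gamma)$ set
\[
v_iv_j \;=\; \begin{cases} 0 & \text{if } v_i\sim v_j \text{ in } \Gamma,\text{ or } i=j,\text{ or } v_i\not\sim v_j\text{ and } i<j,\\ a & \text{if } v_i\not\sim v_j \text{ and } i>j. \end{cases}
\]

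Next I would check associativity. The point is that every two-fold product $xy$ in $S$ lies in $\{0,a\}$, and both $0$ and $a$ annihilate all of $S$ on either side. Hence for any $x,y,z\in S$ both $(xy)z$ and $x(yz)$ equal $0$, and associativity is automatic. I would then read off the commuting graph. Adjacent vertices $v_i,v_j$ satisfy $v_iv_j=0=v_jv_i$, so they commute; non-adjacent vertices $v_i<v_j$ satisfy $v_iv_j=0$ but $v_jv_i=a\neq 0$, so they do not commute. Thus the induced commuting relation on $V(\Gamma)$ is exactly $\Gamma$. Moreover $0$ and $a$ are central by construction, and each $v_i$ is non-central because the hypothesis that no vertex is adjacent to every other one furnishes some $v_j$ with $v_i\not\sim v_j$, whence $v_i$ and $v_j$ do not commute. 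Therefore $Z(S)=\{0,a\}$ and $|S|=|V(\Gamma)|+2$, as required.

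There is no real obstacle here; the only conceptual point worth flagging is that a two-element centre already suffices to ``detect'' non-commutativity, because when $v_i$ and $v_j$ fail to commute the two products $v_iv_j$ and $v_jv_i$ can sit in $Z(S)$ yet be distinct elements of it. Using a total order to assign which of the two central values each ordered pair produces, and letting $a$ behave like a second zero, keeps associativity entirely trivial.
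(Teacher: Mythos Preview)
Your proposal is correct and essentially identical to the paper's proof: the paper also adjoins two central symbols $\{0,z\}$ and defines $v_iv_j$ to be $0$ when $i<j$ or $v_i\sim v_j$, and $z$ when $i>j$ and $v_i\not\sim v_j$. Your direct argument for associativity (every product lands in $\{0,a\}$, and both annihilate $S$) is in fact cleaner than the paper's suggestion of checking it via a transformation representation, but the underlying construction is the same.
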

\begin{proof}
Let  $V(\Gamma)=\{v_1,\dots,v_n\}$ be the ordered vertex set of $\Gamma$.
Pick a set $Z=\{0,z\}$, with two distinct elements disjoint from  $V(\Gamma)$. On the set $S=V(\Gamma)\cup Z$ define a multiplication by $SZ=ZS=\{0\}$, and
$$v_iv_j=\begin{cases}
0; &\hbox{$i<j$ or $(v_i,v_j)$ is an edge in $\Gamma$}\cr
z;& \hbox{$i>j$ and $(v_i,v_j)$ is not an edge in $\Gamma$}.\cr
\end{cases}$$ It is easy to see (say, by associating to each $x\in S$ the transformation $(x\rho)\colon \{1\}\cup S\to \{1\}\cup S$,
defined by $(1)(x\rho)=x$ and $(s)(x\rho)=sx$, for each $s\in S$, and checking that multiplication defined
on $S$ is preserved) that (i) this multiplication is  associative, so $S$ is a semigroup with zero, (ii)
since no vertex in $\Gamma$ is adjacent to all other vertices, $Z(S)=Z$, and (iii)  $v_i,v_j\in S\setminus
Z(S)$ commute if and only if they form an edge in $\Gamma$.\footnote{Our original proof of Theorem
\ref{thm:semigroup} involved a semigroup with centre of order $n^2+1$ and the improvement to one with centre
of order 2 was kindly pointed out to us by Marcel Jackson.}
\end{proof}
 \begin{remark}
Since each set can be well ordered,     only  cosmetic   modification is required to show that every
infinite graph such that no vertex connects to all other vertices, is a commuting graph of a
semigroup.
\end{remark}

\begin{remark}
The upper bound on the order of $S$ is exact. Namely, it can be shown that the six-cycle graph $\Gamma=C_6$
is not the commuting graph of a semigroup of order seven.
\end{remark}

Things are more complicated if we restrict ourselves to centrefree semigroups, that is, to semigroups whose
centre is the empty set.  We demonstrate this with the next lemma and its corollary.

\begin{lemma}
\label{lem:labelling} Let $\Gamma$ be the commuting graph of a centrefree semigroup $S$ and let
$a,b,c\in V(\Gamma)$ such that
\begin{enumerate}[label={\emph{(\roman*)}}]
 \item $a\sim b\sim c$ and $a\not\sim c$,
 \item  no triangle in $\Gamma$ contains $\{a,b\}$ or $\{b,c\}$.
\end{enumerate}
Then
\begin{equation}\label{eq:cycle}
\hbox{either }ab=a\hbox{ and } bc=c\quad \hbox{ or }\quad ab=b=bc.
\end{equation}
\end{lemma}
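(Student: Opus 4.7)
The plan is to first pin down the products $ab$ and $bc$ to a short list of options, and then use associativity to eliminate the unwanted combinations.

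\emph{Step 1: show $ab\in\{a,b\}$ and $bc\in\{b,c\}$.} From $a\sim b$ we get $ab=ba$, hence $a(ab)=a^2b=(ab)a$ and $b(ab)=bab=(ab)b$, so $ab$ commutes with both $a$ and $b$. Since $S$ is centrefree, $Z(S)=\emptyset$, so $ab$ is itself a vertex of $\Gamma$. If $ab\notin\{a,b\}$, then $\{a,b,ab\}$ would be a triangle containing the edge $\{a,b\}$, contradicting~(ii). A symmetric argument yields $bc\in\{b,c\}$.

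\emph{Step 2: rule out the mismatched pairs.} This leaves the four possibilities
\[
(ab,bc)\in\bigl\{(a,c),\,(a,b),\,(b,c),\,(b,b)\bigr\};
\]
the first and last are precisely~\eqref{eq:cycle}. For $(ab,bc)=(a,b)$, commutativity gives $ba=a$ and $cb=b$, and associativity provides
\[
ca=c(ba)=(cb)a=ba=a,\qquad ac=(ab)c=a(bc)=ab=a,
\]
so $ac=ca$; assuming $a\neq c$ this places the edge $a\sim c$ in $\Gamma$, contradicting~(i). The remaining case $(ab,bc)=(b,c)$ is dual: $c(ba)=(cb)a$ becomes $cb=ca$, forcing $ca=c$, while $(ab)c=a(bc)$ becomes $bc=ac$, forcing $ac=c$, and again $a\sim c$. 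The degenerate possibility $a=c$ is vacuous: the hypothesis then reduces to $a\sim b$ and Step~1 alone already gives the conclusion.

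The crux of the argument is really Step~1, which bundles together centrefreeness with the ``no triangle through $\{a,b\}$'' hypothesis to squeeze $ab$ into two options; once that reduction is in place, the elimination of the two mismatched combinations is a short associativity calculation and poses no real obstacle.
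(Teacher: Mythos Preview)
Your proof is correct and follows essentially the same route as the paper's: first use centrefreeness and the no-triangle hypothesis to force $ab\in\{a,b\}$ and $bc\in\{b,c\}$, then eliminate the two mismatched cases by an associativity computation showing $ac=ca$. The only differences are cosmetic --- you spell out explicitly why $ab$ is a vertex and you treat the degenerate case $a=c$, neither of which the paper bothers with.
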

\begin{proof}
Note first that $ab=ba\in S$ commutes with both  $a$ and $b$ and hence in $\Gamma=\Gamma(S)$, $ab$ is
adjacent to $a$ and $b$. By the assumption (ii), it follows that $ab\in\{a,b\}$ and similarly,
$bc=cb\in\{b,c\}$. Now, assume $ab=a$ and $bc=b$. Then, $ac=(ab)c=a(bc)=ab=a=ba=(cb)a=c(ba)=ca$,
contradicting the fact that $a\not\sim c$. Similarly, $ab=b$ and $bc=c$ is not possible since otherwise
$ac=a(bc)=(ab)c=bc=c=cb=c(ba)=(cb)a=ca$.
\end{proof}
\begin{corollary}\label{cor:oddcycle}
Let $\Gamma$ be the commuting graph of a centrefree semigroup $S$. Assume $\Gamma$ contains a cycle $C$ of
length $n\geq 4$ as an induced subgraph such that no two adjacent vertices of $C$ are contained in a
triangle from $\Gamma$. Then $n$ is even.
\end{corollary}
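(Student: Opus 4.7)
The plan is to read off a consistent $\{L,R\}$-labelling of the edges of the cycle $C$ from Lemma~\ref{lem:labelling} and observe that the labels are forced to alternate, which can only happen on a cycle of even length.

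Concretely, write the vertices of $C$ cyclically as $v_1,v_2,\dots,v_n$ (with indices taken mod $n$), so that $v_i\sim v_{i+1}$ for every $i$. Since $C$ is an induced subgraph on $n\ge 4$ vertices, no two non-consecutive $v_i$'s are adjacent in $\Gamma$. In particular, for any three consecutive vertices the triple $(a,b,c):=(v_{i-1},v_i,v_{i+1})$ satisfies $a\sim b\sim c$ with $a\not\sim c$, and by hypothesis neither $\{a,b\}$ nor $\{b,c\}$ lies in a triangle of $\Gamma$. So Lemma~\ref{lem:labelling} applies and gives precisely one of the two alternatives in \eqref{eq:cycle}.

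Now define, for each edge $v_iv_{i+1}$ of $C$, a label $\epsilon_i\in\{L,R\}$ by setting $\epsilon_i=L$ if $v_iv_{i+1}=v_i$ and $\epsilon_i=R$ if $v_iv_{i+1}=v_{i+1}$. The first alternative of \eqref{eq:cycle} applied at $v_i$ reads $v_{i-1}v_i=v_{i-1}$ and $v_iv_{i+1}=v_{i+1}$, i.e.\ $(\epsilon_{i-1},\epsilon_i)=(L,R)$; the second alternative reads $v_{i-1}v_i=v_i=v_iv_{i+1}$, i.e.\ $(\epsilon_{i-1},\epsilon_i)=(R,L)$. In either case $\epsilon_{i-1}\ne\epsilon_i$, so consecutive labels along the cycle must differ.

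Hence $\epsilon_1,\epsilon_2,\dots,\epsilon_n,\epsilon_1$ is an alternating $\{L,R\}$-sequence on a cycle, which forces $n$ to be even. I don't foresee a genuine obstacle: the only thing to check carefully is that at every vertex $v_i$ the hypotheses of Lemma~\ref{lem:labelling} really are satisfied (induced-ness gives $a\not\sim c$, and the triangle-free assumption on adjacent pairs of $C$ gives the no-triangle condition), which is exactly what the statement of the corollary assumes.
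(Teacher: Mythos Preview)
Your proof is correct and follows essentially the same approach as the paper's: both apply Lemma~\ref{lem:labelling} at each vertex of $C$ to extract a two-valued labelling that is forced to alternate around the cycle. The only cosmetic difference is that the paper labels the \emph{vertices} by $\pm1$ (according to which alternative of \eqref{eq:cycle} holds at that vertex), whereas you label the \emph{edges} by $L/R$ (according to which endpoint the product equals); these labellings are in bijection and the alternation argument is identical.
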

\begin{proof}
Suppose that $\Gamma$ contains a cycle $C$. By Lemma \ref{lem:labelling} the vertices of $C$ can be
labeled by $\{\pm1\}$ such that  if $ab=a$ and $bc=c$ we label vertex $b$ with $-1$, otherwise, if
$ab=b=bc$ we label vertex $b$ with $+1$. Hence walking around the cycle, the labeling alternates and
so $|C|$ is even.
\end{proof}

\begin{example}
The assumption in Corollary \ref{cor:oddcycle} that no edge from an induced cycle forms a triangle in
$\Gamma$ is essential. For example, the multiplication table
$$\hbox{\tiny$\bordermatrix{&s_1&s_2&s_3&s_4&s_5 &s_6\cr
    s_1 &s_1 &s_1 &s_1 &s_1 &s_1 &s_1 \cr
    s_2 &s_1 &s_1 &s_1 &s_1 &s_2 &s_2 \cr
    s_3 &s_1 &s_1 &s_1 &s_1 &s_3 &s_3 \cr
    s_4 &s_4 &s_4 &s_4 &s_4 &s_4 &s_4 \cr
    s_5 &s_1 &s_2 &s_2 &s_4 &s_5 &s_5 \cr
    s_6 &s_1 &s_3 &s_3 &s_4 &s_6 &s_6    \cr
                }$} $$
makes the set  $S=\{s_1,s_2,s_3,s_4,s_5,s_6\}$ into  a centrefree semigroup with $\Gamma(S)$ containing the
induced 5-cycle $s_2\sim s_3 \sim s_6 \sim s_4 \sim s_5\sim s_2$. This is possible because $s_1$ forms a
triangle with edges $\{s_5,s_2\}$, $\{s_2,s_3\}$ and $\{s_3,s_6\}$.
\end{example}
\begin{corollary}
If $n\ge 5$ then there exists a graph on $n$ vertices that is the commuting graph of a semigroup but
not the commuting graph of a centrefree semigroup.
\end{corollary}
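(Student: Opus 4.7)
The plan is to exhibit, for each $n\ge 5$, an explicit graph $\Gamma_n$ on $n$ vertices and then invoke Theorem~\ref{thm:semigroup} to realize it over a semigroup while using Corollary~\ref{cor:oddcycle} to rule out realization over any centrefree semigroup. The natural candidate is
$$\Gamma_n = C_5 \sqcup \overline{K_{n-5}},$$
i.e.\ a disjoint union of the $5$-cycle with $n-5$ isolated vertices (so $\Gamma_5=C_5$).

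First, I would verify the positive realizability. Every vertex on the $C_5$-component has exactly two neighbours, while $n-1\ge 4$, and each added isolated vertex has no neighbour at all. Hence no vertex of $\Gamma_n$ is adjacent to all other vertices, and $\Gamma_n$ has at least two vertices, so Theorem~\ref{thm:semigroup} directly produces a semigroup $S$ with $|S|=n+2$ and $\Gamma(S)\cong\Gamma_n$.

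Next, I would rule out centrefree realizations. The $5$-cycle sits inside $\Gamma_n$ as an induced subgraph (the extra vertices are isolated and contribute no edges), and since $C_5$ itself contains no triangle and nothing outside it is adjacent to any of its vertices, no two adjacent vertices of this induced $C_5$ lie in a common triangle of $\Gamma_n$. If $\Gamma_n$ were the commuting graph of a centrefree semigroup, Corollary~\ref{cor:oddcycle} would force the length of this induced cycle to be even, a contradiction.

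There is no serious obstacle here: the only content is choosing a graph that simultaneously satisfies the hypotheses of Theorem~\ref{thm:semigroup} and contains a suitable induced odd cycle witnessing Corollary~\ref{cor:oddcycle}, and $C_5\sqcup \overline{K_{n-5}}$ does both at once.
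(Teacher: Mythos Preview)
Your proof is correct and follows essentially the same approach as the paper: exhibit a graph containing an induced odd cycle (with possibly some isolated vertices attached), then invoke Theorem~\ref{thm:semigroup} for realizability and Corollary~\ref{cor:oddcycle} to rule out centrefree realizability. The only cosmetic difference is that the paper splits on parity, using the $n$-cycle for odd $n$ and the $(n-1)$-cycle plus one isolated vertex for even $n$, whereas your uniform choice of $C_5\sqcup\overline{K_{n-5}}$ avoids the case distinction.
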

\begin{proof}
If $n$ is odd consider an $n$-cycle. If $n$ is even, consider the disjoint union of an $(n-1)$-cycle and an isolated
vertex.
\end{proof}

\begin{example}
The edge-less graph $\Gamma$ on $n$ vertices is the commuting graph of the centreless semigroup
$S=\{v_1,\ldots,v_n\}$ with multiplication defined by $v_iv_j=v_i$. Note that associativity holds as
$v_i(v_jv_k)=v_iv_j=v_i$ and $(v_iv_j)v_k=v_iv_k=v_i$. Also $\C_S(v_i)=\{v_i\}$.
\end{example}
\begin{example}
GAP calculations show that each graph on at most four vertices that does not satisfy the obstructions in
Lemma \ref{lem:obstruction} is the commuting graph of some centrefree semigroup.
\end{example}

\begin{example}
GAP  calculations show that there exist exactly two simple graphs on $5$ vertices, each vertex with valency
at most three, which are not the commuting graph of a centrefree semigroup. One such graph is the 5-cycle
while the other is a \emph{house} 
with adjacency matrix $\left(
\begin{smallmatrix}
 0 & 1 & 1 & 1 & 0 \\
 1 & 0 & 1 & 0 & 1 \\
 1 & 1 & 0 & 0 & 0 \\
 1 & 0 & 0 & 0 & 1 \\
 0 & 1 & 0 & 1 & 0
\end{smallmatrix}
\right)$.

Note that, by Theorem~\ref{thm:semigroup}, both graphs are commuting graphs of a semigroup with nontrivial
centre.
\end{example}

Next we give a complete picture of when a cycle is the commuting graph of some centrefree semigroup and so proving Theorem \ref{thm:cycle-semigps}. It
turns out that if such a semigroup exists, it is essentially unique. Clearly, this is no longer the case if
the restriction about being centrefree is removed because $S$ and its unitization $S^1=S\cup\{1\}$ have the
same commuting graph. Note the following consequence of the Lemma below: there is no upper bound on the
diameter of a connected commuting graph of centrefree semigroups (see also \cite[Theorem 4.1]{AKK}).

Semigroups $S_1$ and $S_2$ are (anti)isomorphic if there exists a bijection $\phi\colon S_1\to S_2$ such
that $(ab)\phi=(a\phi)(b\phi)$ for each $a,b\in S_1$ (that is, $\phi$ is an isomorphism) or
$(ab)\phi=(b\phi)(a\phi)$ for each $a,b\in S_1$ (that is, $\phi$ is an antiisomorphism).

\begin{theorem}\label{lem:cycle-semigps}
A cycle is the commuting graph of some centrefree semigroup if and only if its length is divisible by four.
Up to (anti)isomorphism there exists at most one centrefree semigroup whose commuting graph is a given
cycle.
\end{theorem}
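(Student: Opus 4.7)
The plan has three parts: necessity of divisibility by $4$, sufficiency, and uniqueness.

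For necessity, suppose $S$ is centrefree and $\Gamma(S) = C_n$. By Corollary \ref{cor:oddcycle}, $n = 2m$ is even, and Lemma \ref{lem:labelling} gives an alternating $\pm$-labelling of the vertices. I would first show that every $v_i$ is idempotent: $v_i^2$ commutes with $v_{i-1}, v_i, v_{i+1}$ (via the adjacent products given by Lemma \ref{lem:labelling}), and the only vertex of $C_n$ (for $n \geq 5$) commuting with all three is $v_i$ itself. A centraliser computation then places the distance-$2$ products $v_{i-1}v_{i+1}$ in $\{v_{i-1}, v_i, v_{i+1}\}$. The central value $v_i$ is ruled out: if $v_{i-1}v_{i+1} = v_i$, then pushing through the identity $v_{i+1}(v_{i-1}v_{i+1}) = (v_{i+1}v_{i-1})v_{i+1}$ forces $v_{i+1} = v_i$, a contradiction. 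Hence each distance-$2$ pair $\{v_{i-1}, v_{i+1}\}$ has a well-defined orientation $\epsilon_i \in \{+,-\}$ telling which endpoint wins.

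The crux is propagating $\epsilon$ around the cycle. Further associativity identities, such as $(v_{j-1}v_{j+1})v_{j+2} = v_{j-1}(v_{j+1}v_{j+2})$, yield constraints of the form ``$\epsilon_j = -$ forces the distance-$3$ product $v_{j-1}v_{j+2}$ to equal $v_{j+2}$'' (and analogously for $\epsilon_{j+1}$). Combining these distance-$3$ identities with similar ones involving distance-$4$ products, a consistency closure around the cycle emerges whose solvability is equivalent to $m$ being even --- that is, $n \equiv 0 \pmod 4$. The excluded cases $n = 6, 10, \ldots$ can be verified directly as concrete inconsistencies.

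For sufficiency, I would exhibit an explicit centrefree semigroup $S$ of size $4k$ with $\Gamma(S) = C_{4k}$. For $k=1$ one can take the $4$-element band where two elements are left-zeros of $S$ and two are left-identities; every $L$-$R$ pair then commutes, yielding $\Gamma(S) = K_{2,2} = C_4$. For general $k$, I would construct a suitable semilattice of rectangular bands that refines the simple $L$-versus-$R$ split, so that the bipartite commutation pattern between the $+$- and $-$-classes is exactly $C_{4k}$ rather than the complete bipartite graph. Centrefreeness, associativity, and $\Gamma(S) = C_{4k}$ are then verified directly.

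For uniqueness, the analysis in the necessity step has effectively forced the multiplication of $S$: adjacent products are prescribed by the $\pm$-labels, distance-$2$ products by the orientations $\epsilon_i$, and higher-distance products by the associativity identities used in the parity argument. The only remaining degree of freedom is the global swap of $+$ and $-$, which interchanges a semigroup with its opposite $S^{\mathrm{op}}$ --- hence uniqueness up to (anti)isomorphism. I expect the main technical obstacle to be the parity step: namely, combining distance-$2$, distance-$3$, and distance-$4$ associativity identities to get a closing condition that yields $4 \mid n$ rather than merely $2 \mid n$.
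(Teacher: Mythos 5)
Your framework for the necessity direction coincides with the paper's: Corollary~\ref{cor:oddcycle} gives evenness, Lemma~\ref{lem:labelling} gives the alternating labelling, idempotency follows because the centraliser of a vertex is that vertex together with its two neighbours, and the distance-two products are forced onto one of the two endpoints. The genuine gap is exactly where you flag it: the passage from $2\mid n$ to $4\mid n$. ``A consistency closure emerges whose solvability is equivalent to $m$ being even'' is a restatement of the goal, not an argument, and the local distance-$3$/distance-$4$ propagation you describe is not enough to close the cycle. What actually has to be proved is global: first that every $+$-labelled vertex $x$ satisfies $xy=x$ for \emph{all} $y\in S$ (so the $+$-class consists of left zeros of the whole semigroup), then that the product of a $-$-vertex with \emph{any} $+$-vertex lies in the pair of $+$-neighbours of that $-$-vertex, and finally, via a dichotomy lemma and a minimal-counterexample induction on distance, that $x_1x_{2j}$ alternates strictly between the two neighbours of $x_1$ as $x_{2j}$ runs once around the $+$-class; only the closing-up of \emph{that} alternation forces $4\mid n$. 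The sufficiency direction has the same character: your $k=1$ band is correct, but for general $k$ ``a suitable semilattice of rectangular bands'' is a placeholder, whereas an explicit multiplication (even-indexed vertices as left zeros, the odd class a left-zero band, and $x_{2i+1}x_{2j}$ equal to $x_{2i}$ or $x_{2i+2}$ according to $j-i \bmod 2$) must be written down and checked for associativity.

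The uniqueness step is also not established, and your own $k=1$ example exposes the problem concretely. The necessity analysis as you present it pins down only adjacent and distance-two products; it says nothing about products of two $-$-vertices or of vertices at larger distance, so ``the multiplication is forced'' does not follow. Indeed, your band $T$ on $\{a,b,c,d\}$ (two left zeros, two left identities) has $cd=d$ and $dc=c$, i.e.\ the $-$-class $\{c,d\}$ is a \emph{right}-zero band, whereas the semigroup obtained from the forced relations has the $-$-class acting as a \emph{left}-zero band ($x_1x_3=x_1$, $x_3x_1=x_3$). One checks that $T$ has two left zeros and two left identities, while the other semigroup has two left zeros but no left identities and no right zeros, which rules out both an isomorphism and an anti-isomorphism between them. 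So for $C_4$ there are two non-(anti)isomorphic centrefree realizations: the step that would force the odd--odd products relies on a $-$-vertex being determined by its pair of $+$-neighbours, which fails precisely in the $4$-cycle. You therefore cannot both keep your $k=1$ construction and assert uniqueness as stated; at minimum the uniqueness claim needs the restriction to cycles of length at least $8$, and in any case it requires the full forcing argument for all products, not just the adjacent and distance-two ones.
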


\begin{proof}
\newcounter{step}
\setcounter{step}{0} By Lemma~\ref{lem:obstruction}, a triangle is not the commuting graph of a
semigroup. So, by Corollary~\ref{cor:oddcycle} it only remains to consider even cycles.

Suppose therefore that an even cycle $C_{2k}$ is the commuting graph of a centrefree semigroup $S$. Each
vertex $x\in\Gamma(S)$ has exactly two neighbors, say $y,z$. It follows that
$x^2\in\C_S(\{y,x,z\})=\{x\}$, so $S$ is a band.

We label the vertices of $\Gamma(S)$ be $x_0,x_1,\ldots,x_{2k-1}$ such that $x_i\sim x_{i\pm1}$ where
addition of subscripts is done modulo $2k$. Now by Lemma~\ref{lem:labelling}, the identity
\eqref{eq:cycle} holds for $\Gamma(S)$ and so, as in the proof of Corollary~\ref{cor:oddcycle}, the
vertices of $\Gamma(S)$ can be labeled by $\{\pm1\}$. Without loss of generality we label the vertices
with even subscripts by $+1$ and those with odd subscripts by $-1$. Recall that this means that
\begin{equation}\label{eq:start-even-cycle-thm}
x_{2i}x_{2i\pm 1}=x_{2i\pm 1}x_{2i}=x_{2i} ;\qquad
i\in\{0,1,\dots,k-1\}.
\end{equation}

\addtocounter{step}{1} {\bf Step~\thestep}. If $x,y\in C_{2k}$ are at distance two then
\begin{equation}\label{eq:even-cycle}
xy\in\{x,y\}.
\end{equation}
To see this, let $x\sim b\sim y$ be a path of length two from $x$ to $y$. Note that $xy$ commutes with $b$ and so $xy\in\{x,b,y\}$. Suppose that $xy=b$. Then $xy$ commutes with $x$ and so
$$bx=(xy)x=x(xy)=xy=b.$$
Thus $b$ is labeled $+1$ and we also have $by=b$. Now $yx$ also commutes with $b$ and since $x\not\sim
y$ we must have $yx\in\{x,y\}$. If $yx=x$ then using the fact that $x^2=x$ we have that $xyx=x$ and so
$bx=x$, contradicting $b$ being labeled $+1$. Thus $yx=y$, but then $yxy=y$ and so $by=y$, another
contradiction. Thus (\ref{eq:start-even-cycle-thm}) holds.

\addtocounter{step}{1} {\bf Step~\thestep}. By \eqref{eq:even-cycle}, $x_0x_2\in\{x_0,x_2\}$ and by
considering, if necessary, the opposite semigroup (with multiplication given by $a\cdot b:=ba$) we may
assume that
\begin{equation}\label{eq:x0x2=x0}
x_0x_2=x_0.
\end{equation}

\addtocounter{step}{1} {\bf Step~\thestep}. We prove by induction that
\begin{equation}\label{eq:x2ix(2i+2)=x(2i)}
x_{2i}x_{2i+2}=x_{2i};\qquad i\in\{0,1,\dots,k-1\}.
\end{equation}
The base step is given by \eqref{eq:x0x2=x0}. To prove the induction step, assume we
have already shown that vertices $a,c$ with $d(a,c)=2$ and with labels $+1$ satisfy
\begin{equation}
ac=a.
\end{equation}
Choose $e\ne a$ with $d(c,e)=2$ and assume to the
contrary that
\begin{equation}\label{eq:ec=c}
ce=e.
\end{equation}
By~\eqref{eq:even-cycle}, $ca\in\{c,a\}$ and as $ca\ne ac=a$, we have
$$ca=c.$$ Likewise $ec=c$. Moreover, if $d\in\C_S(\{c,e\})$
is the unique vertex commuting with both $c$ and $e$ then,  since both $c,e$ have  label $+1$, so
$$dc=c=cd\quad\hbox{and}\quad ed=e=de,$$ and
from the inductive hypothesis \eqref{eq:ec=c} we deduce
\begin{equation}\label{eq:ad=a}
ad=(ac)d=a(cd)=ac=a.
\end{equation}
Furthermore, $d(da)=d^2a=da=d(ad)=(da)d$, so $da\in\C_S(d)=\{c,d,e\}$. Note that  $da=d$ is impossible
because then $da$ would commute with $c$ and we would get that $c=cd=c(da)=(da)c=d(ac)=da=d$, a
contradiction. Hence,
\begin{equation}\label{eq:ad<>d}
da\in\{c,e\}.
\end{equation} Now, by our assumption $cea=ea=eac$, and by~\eqref{eq:ad=a}
also $dea=ea=ead$ so $ea\in\C_S(\{c,d\})=\{c,d\}$. If $ea=d$ then, by~\eqref{eq:ad<>d},
$d=ea=ea^2=(ea)a=da\in\{c,e\}$, a contradiction. Hence,
$$ea=c.$$
Next,  in  \eqref{eq:ad<>d}, $da=e$ is impossible since then $e=ce=c(da)=(cd)a=ca=c$. So,
$$da=c.$$
For the unique $b\in\C(\{a,c\})$,  which clearly satisfies
$$ba=a=ab\quad\hbox{and}\quad bc=c=cb$$
this further gives $(bd)a=b(da)=bc=c$. Thus, $bd\ne b$ (because $ba=a=a^2$), and as
\eqref{eq:even-cycle} implies $bd\in\{b,d\}$ we see that
$$bd=d.$$
We also have $db\in\{b,d\}$ and since $d\not\sim b$ it follows that $db=b$.
However, we then have $a=ba=(db)a=d(ba)=d(ab)=(da)b=cb=c$, again a  contradiction. Hence, \eqref{eq:ec=c} is contradictory and we must have
    $ec=e$, which proves the induction step and hence the
    Eq.~(\ref{eq:x2ix(2i+2)=x(2i)}).

\addtocounter{step}{1} {\bf Step~\thestep}. We claim that
\begin{equation}\label{eq:ay=a}
x_{2i}y=x_{2i};\qquad \text{ for all }x_{2i},y\in S.
\end{equation}
In fact,  \eqref{eq:x2ix(2i+2)=x(2i)} implies $x_{0}x_{4}=(x_{0}x_2)x_4=x_0(x_2x_4)=x_0x_2=x_0$, and
by induction, $x_0x_{2i}=x_0$. Likewise we see that
$$x_{2i}x_{2j}=x_{2i};\qquad \text{ for all }0\le i,j\le k-1.$$
Furthermore, by \eqref{eq:start-even-cycle-thm} $x_{2j}x_{2j+1}=x_{2j+1}x_{2j}=x_{2j}$ which implies
$x_{2i}x_{2j+1}=(x_{2i}x_{2j})x_{2j+1}=x_{2i}(x_{2j}x_{2j+1})=x_{2i}x_{2j}=x_{2i}$, which proves
\eqref{eq:ay=a}.

\addtocounter{step}{1} {\bf Step~\thestep}. By \eqref{eq:ay=a},
$x_{2i+1}x_{2j}\in\C_S(x_{2i+1})=\{x_{2i},x_{2i+1},x_{2i+2}\}$ for all $i,j$. Actually, $x_{2i+1}x_{2j}=x_{2i+1}$ is
impossible since then, $x_{2i+1}x_{2i}=x_{2i}$ would give
$x_{2i}=x_{2i+1}x_{2i}=(x_{2i+1}x_{2j})x_{2i}=x_{2i+1}(x_{2j}x_{2i})=x_{2i+1}x_{2j}=x_{2i+1}$, a
contradiction. Hence
\begin{equation}\label{eq:x(2i+1)x(2j)}
x_{2i+1}x_{2j}\in\{x_{2i},x_{2i+2}\}.
\end{equation}
In particular, $x_{2i+1}x_{2j}$ has label $+1$ for all $i$ and $j$, and for a fixed $i$  can take only two
values as $j$ varies.

\addtocounter{step}{1}\newcounter{stepp}\setcounter{stepp}{\thestep} {\bf Step~\thestep}. Assume we
have $x_{2i+1}x_{2j}\ne x_{2i+1}x_{2j+2}$ for some $i,j$. We claim that then
\begin{equation}\label{eq:11}
x_{2i+1}x_{2j+1}=x_{2i+1}.
\end{equation} To
see this, we first show that the product $t:=x_{2i+1}x_{2j+1}$ cannot have label $+1$. Namely,
otherwise, by \eqref{eq:ay=a}, $ty=t$ for every $y\in S$. Combined with $x_{2j+1}x_{2j}=x_{2j}$ and
$x_{2j+1}x_{2j+2}=x_{2j+2}$ we would have
$$t=tx_{2j}=(x_{2i+1}x_{2j+1})x_{2j}=x_{2i+1}(x_{2j+1}x_{2j})=x_{2i+1}x_{2j},$$
and likewise
$$t=tx_{2j+2}=(x_{2i+1}x_{2j+1})x_{2j+2}=x_{2i+1}x_{2j+2},$$
which contradicts the fact that $x_{2i+1}x_{2j}\ne x_{2i+1}x_{2j+2}$. Thus $x_{2i+1}x_{2j+1}$ has
label $-1$ and as such commutes with exactly two vertices with label +1, namely $t_{\mathrm{prec}}$ and $t_{\mathrm{succ}}$. By \eqref{eq:ay=a} and \eqref{eq:x(2i+1)x(2j)}, given $s\in\{t_{\mathrm{prec}},
t_{\mathrm{succ}}\}$, we have
\begin{align*}
s&=s(x_{2i+1}x_{2j+1})=(x_{2i+1}x_{2j+1})s\in
x_{2i+1}\{x_{2j},x_{2j+2}\}\subseteq\{x_{2i},x_{2i+2}\}.
\end{align*}
The only option is therefore $\{t_{\mathrm{prec}}, t_{\mathrm{succ}}\}=\{x_{2i},x_{2i+2}\}$, and hence
$t=x_{2i+1}$, as this is the only vertex with label $-1$ that
commutes with $\{t_{\mathrm{prec}}, t_{\mathrm{succ}}\}=\{x_{2i},x_{2i+2}\}$.

\addtocounter{step}{1} {\bf Step~\thestep}. We claim that $d(x_{2j},x_{2t})=2$ implies
$x_{2i+1}x_{2j}\ne x_{2i+1}x_{2t}$ for each $i\in\ZZ_k$. Without loss of generality we assume $2t=2j+2$.

In fact, this claim is trivial whenever $x_{2i+1}$ is adjacent to both $x_{2j}$ and $x_{2t}$ because
then,  by \eqref{eq:start-even-cycle-thm}, $x_{2i+1}x_{2j}=x_{2j}\ne x_{2t}=x_{2i+1}x_{2t}$.

Suppose next $x_{2i+1}$ is adjacent to only one of $x_{2j}$ and $x_{2t}$. By symmetry we may assume that
$x_{2i+1}\sim x_{2j}\sim x_{2j+1}\sim x_{2j+2}=x_{2t}$. Let us denote $b:=x_{2i+1}$, $c:=x_{2i+2}$,
$d:=x_{2i+3}$ and $e:=x_{2i+4}=x_{2t}$. Suppose contrary to the claim that $bc=c=be$.
By~\eqref{eq:even-cycle},
    $bd,db\in\{d,b\}$. Now, $bd=d$ implies
    $e=de=(bd)e=b(de)=be=c$, a contradiction. Hence $bd=b$. Since $b\not\sim d$ we have $db=d$. However, this
    implies $c=dc=d(be)=(db)e=de=e$,
    a contradiction. Thus $bc=c$ and $be\ne c$.

Assume now the claim does not hold and let $x_{2i_0+1}$ and $x_{2j_0}$ be the vertices with least
distance for which
$$x_{2i+1}x_{2j}= x_{2i+1}x_{2t}; \qquad d(x_{2j},x_{2t})=2.$$
Clearly, $2t_0=2j_0+2$ or $2t_0=2j_0-2$. By the symmetry, we may assume the former, so
$\delta_0:=d(x_{2i_0+1},x_{2t_0})=d(x_{2i_0+1},x_{2j_0+2})>d(x_{2i_0+1},x_{2j_0})$.
 By the first part
of the proof of Step~\thestep{} we must have  $\delta_0\ge 5$. Then, there exists a vertex
$\hat{d}=x_{2j_0-1}=x_{2(j_0-1)+1}$ with label $-1$ (on the short arc from $x_{2i_0+1}$ to $x_{2j_0}$)
such that simultaneously $d(x_{2i_0+1},\hat{d})\le \delta_0-3$ and $d(\hat{d},x_{2j_0+2})=3$. By the
minimality of distance $\delta_0$ we have $\hat{d}x_{2i_0}\ne \hat{d}x_{2i_0+2}$, hence by
Step~\thestepp, $\hat{d}x_{2i_0+1}=\hat{d}$. Therefore,
\begin{align*}
\hat{d}x_{2j_0}&=(\hat{d}x_{2i_0+1})x_{2j_0}=\hat{d}(x_{2i_0+1}x_{2j_0})=\hat{d}(x_{2i_0+1}x_{2j_0+2})=
(\hat{d}x_{2i_0+1})x_{2j_0+2}\\
&=\hat{d}x_{2j_0+2},
\end{align*}
contradicting the fact that $\delta_0$ was the minimal distance with such equality possible, while
$d(\hat{d},x_{2j_0+2})=3<\delta_0$.

\addtocounter{step}{1} {\bf Step~\thestep}. Combining the previous step with \eqref{eq:x(2i+1)x(2j)} shows
that $x_1x_{2j}$ alternates between $x_0$ and $x_2$ as $j$ varies over $\ZZ_{k}$. This is only possible if $j$ is even, or equivalently, if the number of vertices in the
cycle is divisible by four.

Conversely, given a cycle with $4k$ vertices $C_{4k}=\{x_0,x_1,\dots,x_{4k-1}\}$, define the
multiplication in the only possible way (up to considering the opposite semigroup, i.e. up to
antiisomorphism), determined by \eqref{eq:start-even-cycle-thm}, \eqref{eq:x2ix(2i+2)=x(2i)},
\eqref{eq:ay=a}--\eqref{eq:11}, that is,
\begin{align*}
x_{2i}y&:=x_{2i};\\
  x_{2i+1}x_{2i+4j}&:=x_{2i};\quad
x_{2i+1}x_{2i+4j+2}:=x_{2i+2};\\
x_{2i+1}x_{2j+1}&:=x_{2i+1}
\end{align*} for all $i,j\in\ZZ_{2k}$ and $y\in
C_{4k}$,  where addition of subscripts in modulo $4k$.  It is straightforward to verify that this
multiplication is associative, hence makes $C_{4k}$ into a semigroup, and that
 $xy=yx$  if and only if $x$ and $y$ are adjacent vertices in
 the cycle $C_{4k}$. Moreover, as shown in previous steps, up to considering the opposite semigroup
this is the only option   for a centrefree semigroup whose commuting graph is the $4k$-cycle.
\end{proof}

\section{Commuting graph of a group}\label{sec:groups}
We now turn our attention to realizability over groups. Our main results classify graphs with an isolated
vertex or edge that are commuting graphs  of a group. Conversely, we also classify all groups whose
commuting graphs have an isolated vertex or edge.

Let us start with several results of a general nature. Our first lemma is well-known, but we give a
proof for the sake of completeness. Recall that $d(v)$ denotes the valency of a vertex $v$, that is, the
cardinality of the set of all vertices adjacent to $v$.
\begin{lemma} \label{centre-of-gp}
Let $G$ be a finite group with centre $Z$ and $\Gamma$ its commuting graph. Then  $|Z|$ is a common
divisor of the integers
$\{d(v)+1 \mid v\in\Gamma\}$.\\
 In particular, if $d_{\min}$ is the
minimal valency of vertices in $\Gamma$, then $Z$ has at most $d_{\min}+1$ elements.
\end{lemma}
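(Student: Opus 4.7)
The plan is to use Lagrange's theorem applied to the centralizer $\C_G(v)$ of a vertex $v$, and to count its elements carefully in terms of $v$, $Z$, and the neighbours of $v$ in $\Gamma$.

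First I would fix an arbitrary vertex $v \in V(\Gamma) = G \setminus Z$ and describe the centralizer $\C_G(v)$ as a disjoint union. Since $v \notin Z$, the centralizer contains: (a) the single element $v$ itself, (b) every element of the centre $Z$ (these all commute with $v$ and are disjoint from $\{v\}$), and (c) every vertex of $\Gamma$ adjacent to $v$, i.e.\ every element of $G \setminus Z$ which commutes with $v$ and is distinct from $v$. These three sets are pairwise disjoint and together exhaust $\C_G(v)$, so
\[
|\C_G(v)| \;=\; 1 + |Z| + d(v).
\]

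Next I would invoke the fact that $Z \leqslant \C_G(v)$ is a subgroup. By Lagrange's theorem, $|Z|$ divides $|\C_G(v)| = |Z| + (d(v)+1)$, and hence $|Z|$ divides $d(v)+1$. Since $v$ was arbitrary, $|Z|$ is a common divisor of the set $\{d(v)+1 \mid v \in V(\Gamma)\}$, which is the first assertion.

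Finally, the ``in particular'' statement is immediate: if $|Z|$ divides the positive integer $d_{\min}+1$, then $|Z| \leqslant d_{\min}+1$. There is essentially no obstacle here; the only thing to be careful about is the bookkeeping in the disjoint-union decomposition of $\C_G(v)$, specifically that $v \notin Z$ guarantees $\{v\}$ is disjoint from $Z$, and that adjacency in $\Gamma$ is defined between distinct vertices of $G\setminus Z$ so the neighbour set of $v$ is disjoint from both $\{v\}$ and $Z$.
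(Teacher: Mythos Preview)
Your proof is correct and takes essentially the same approach as the paper: both argue that $Z\leqslant \C_G(v)$ and use Lagrange to deduce that $|Z|$ divides $|\C_G(v)\setminus Z|=d(v)+1$. The paper phrases the count as ``the neighbourhood of $v$ equals $\C_G(v)\setminus(Z\cup\{v\})$'' rather than writing out the three-part disjoint union, but the content is identical.
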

\begin{proof}
Take any $v\in\Gamma$. Its neighborhood equals $\C_G(v)\setminus (Z\cup\{v\})$. Observe that $\C_G(v)$ is a
group which contains $Z$ as a subgroup and therefore, $|\C_G(v)|$ is divisible by $|Z|$. Hence, $|Z|$ also
divides $|\C_G(v)\backslash Z|=d(v)+1$.
\end{proof}

\begin{lemma}\label{lem:smalldiamconcomp}
Let $\Gamma_1$ be a connected component with diameter at most two of the commuting graph of a group
$G$. Then $\Gamma_1\cup Z(G)$ is a subgroup of $G$.
\end{lemma}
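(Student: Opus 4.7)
The plan is to verify that $\Gamma_1 \cup Z(G)$ is a subgroup of $G$ by checking it contains the identity and is closed under inversion and multiplication. The identity lies in $Z(G)$ and is therefore automatic. Throughout, I will lean on the basic observation: if $g \in G$ commutes with some $x \in \Gamma_1$, then $g \in \Gamma_1 \cup Z(G)$. Indeed, if $g \in Z(G)$ we are done, otherwise $g$ and $x$ are two non-central elements that commute, so they are either equal or adjacent in $\Gamma(G)$, placing $g$ in the connected component of $x$, namely $\Gamma_1$.

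Closure under inversion is immediate. If $x \in Z(G)$ then $x^{-1} \in Z(G)$. If $x \in \Gamma_1$, then $x^{-1}$ is non-central (else $x$ would be) and commutes with $x$, so the observation yields $x^{-1} \in \Gamma_1$.

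For closure under products, take $x, y \in \Gamma_1 \cup Z(G)$ and write $p = xy$. If $x, y \in Z(G)$ then $p \in Z(G)$; if exactly one of $x, y$ lies in $Z(G)$, say $x$, then $p$ commutes with $y \in \Gamma_1$ and the observation gives $p \in \Gamma_1 \cup Z(G)$. The substantive case is $x, y \in \Gamma_1$; here the strategy is to produce $z \in \Gamma_1$ commuting with both $x$ and $y$, whence $z$ commutes with $p$ and the observation once more yields $p \in \Gamma_1 \cup Z(G)$. If $x = y$ or $x \sim y$ in $\Gamma_1$ I take $z = x$; otherwise $d(x,y) = 2$ by the diameter hypothesis, so a common neighbour of $x$ and $y$ in $\Gamma_1$ supplies the required $z$.

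This exhausts all cases. I do not anticipate a serious obstacle: the proof is a careful chase through the definition of connected component, and the diameter-two assumption is invoked exactly once, to guarantee the common neighbour $z$ in the last sub-case. The only potential pitfall is mechanical — ensuring in each sub-case that the element produced really is known to commute with a specific element of $\Gamma_1$, rather than merely with something in $G$.
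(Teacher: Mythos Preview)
Your proof is correct and follows essentially the same approach as the paper's: both verify closure under inverses by noting $x^{-1}$ commutes with $x$, and both handle products by producing an element of $\Gamma_1$ that commutes with both factors --- either one of the factors themselves when they commute, or a common neighbour supplied by the diameter-two hypothesis when they do not. Your explicit ``basic observation'' simply makes transparent what the paper uses implicitly via $\C_G(g)\subseteq Z(G)\cup\Gamma_1$.
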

\begin{proof}
Choose any $v\in \Gamma_1$. Then $v^{-1}\in \C_G(v)$ and is clearly  not in $Z(G)$. So either $v$ is an
involution or $v^{-1}\sim v$, and hence in both cases we have that $v$ belongs to the connected component
containing $v$, that is, to $\Gamma_1$. It remains to show that the product of any $v,w\in Z(G)\cup\Gamma_1$
is also inside $Z(G)\cup \Gamma_1$. This is trivial if $v,w$ commute, since their product is either in
$Z(G)$ or else commutes with $v$ and $w$, hence is adjacent to both $v$ and $w$. Assume $v,w$ do not
commute. Since the diameter of $\Gamma_1$ is at most two, there exists $g\in\C_G(v)\cap\C_G(w)\cap\Gamma_1$.
Thus, $v,w\in\C_G(g)$, so also $vw\in\C_G(g)\subseteq Z(G)\cup \Gamma_1$.
\end{proof}

Our next example shows that the previous lemma does not hold for components with larger diameters.
\begin{example}
Let  $\Gamma=\Gamma(S_4)$.  It is an elementary calculation that the set of elements of order $2$ or $4$ form a
connected component of $\Gamma$ on $15$ vertices with diameter three. However, $S_4$ contains no
subgroups of order $15+|Z(S_4)|$.
\end{example}
A proper subgroup $M$ of a group $G$ is called a \emph{CC-subgroup} if $\C_G(m)\leqslant M$ for all $m\in
M\backslash\{1\}$. By Lemma \ref{lem:smalldiamconcomp}, if $Z(G)=1$ and $\Gamma_1$ is a connected component
with diameter at most two of the commuting graph of $G$ then $\Gamma_1\cup \{1\}$ is a CC-subgroup of $G$.
The structure of finite groups with a CC-subgroup was determined by Arad and Herfort \cite{AH}. Note
however, that there are groups with nontrivial centre whose commuting graph contains a connected component
of diameter at most two (see, for example Theorem~\ref{thm:edge-center}
below). Such a group cannot have a proper CC-subgroup.

 We now state two simple obstructions for realizability among groups. The second one
reflects sharply against the realizability with centrefree semigroups (c.f. Lemma~\ref{lem:cycle-semigps}).
We remark  that Theorems \ref{thm:isolated-vertex} and \ref{thm:14} give additional obstructions.

\begin{lemma}\label{lem:group-obstruction}
Let $\Gamma$ be a graph. If $|V(\Gamma)|\leq 4$ then $\Gamma$ is not the commuting graph of a group.
\end{lemma}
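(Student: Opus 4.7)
The plan is to exploit the identity
\[
|V(\Gamma(G))| \;=\; |G|-|Z(G)| \;=\; |Z(G)|\cdot\bigl(|G/Z(G)|-1\bigr),
\]
together with the classical fact that $G/Z(G)$ is never a nontrivial cyclic group. In particular, $|G/Z(G)|$ is either $1$ or at least $4$ (it cannot equal a prime). This immediately rules out most small values of $|V(\Gamma)|$.

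First I would dispatch the abelian case: if $G$ is abelian then $Z(G)=G$ and so $V(\Gamma(G))=\emptyset$, which gives $|V(\Gamma(G))|=0$ and is inconsistent with $1\leq|V(\Gamma)|\leq 4$. (If one allows the empty graph, this case is recorded separately.) Hence we may assume $G$ is non-abelian, so $G/Z(G)$ is non-cyclic and in particular $|G/Z(G)|\geq 4$.

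Next, combining $|V(\Gamma(G))|\leq 4$ with $|G/Z(G)|-1\geq 3$ forces $|Z(G)|\leq 4/3$, so $|Z(G)|=1$. Then $|G|=|G/Z(G)|\in\{4,5\}$. The value $|G|=5$ gives $G$ cyclic of prime order, contradicting $|G/Z(G)|\geq 4$; and $|G|=4$ means $G$ is a group of order four, which is abelian, so $Z(G)=G$ and $|V(\Gamma(G))|=0$, again a contradiction.

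There is no genuinely hard step here; the only subtlety is remembering that $G/Z(G)$ cyclic would make $G$ abelian (so $|G/Z(G)|$ must dodge all primes as well as $1$), which is what produces the jump $1\to 4$ in the allowed values of $|G/Z(G)|$ and thus kills every $|V(\Gamma)|\in\{1,2,3,4\}$.
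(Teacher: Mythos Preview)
Your proof is correct and takes a different route from the paper's. The paper first invokes Lemma~\ref{lem:obstruction} to rule out any vertex adjacent to all others, then uses Lemma~\ref{centre-of-gp} (the minimum-valency divisibility bound) to conclude $|Z(G)|\leq 3$, hence $|G|\leq 7$, and finishes by observing that the only non-abelian group of order at most~$7$ is $S_3$, whose commuting graph has five vertices. Your argument instead exploits the factorisation $|V(\Gamma)|=|Z(G)|\bigl(|G/Z(G)|-1\bigr)$ together with the classical fact that $G/Z(G)$ cannot be nontrivially cyclic, forcing $|G/Z(G)|\geq 4$ when $G$ is non-abelian; this immediately gives $|Z(G)|=1$ and $|G|\leq 5$, and all such groups are abelian. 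Your approach is self-contained (it does not rely on the earlier lemmas of the paper) and avoids any case-by-case inspection of small groups, while the paper's argument showcases the valency--centre divisibility lemma that is used repeatedly elsewhere. Both arguments tacitly assume $\Gamma$ is nonempty, which you note explicitly.
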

\begin{proof}
Let $\Gamma$ be a graph with at most four vertices and suppose that $\Gamma$ is the
     commuting graph of the group $G$. If there exists a vertex connected to every other vertex,
     the result follows from  Lemma~\ref{lem:obstruction}. Otherwise, the valency of each vertex is at
     most two. By Lemma~\ref{centre-of-gp} this implies that the centre of $G$ has at most three
     elements, and $|G|\leq 4+3=7$. All such groups are abelian except for $S_3$, but the
     commuting graph of $S_3$ has five vertices.
\end{proof}

\begin{lemma}\label{lem:edgeintriangle}
Let $\Gamma$ be the commuting graph of a group $G$. Then each edge $\{a,b\}$ that is not an isolated edge lies on a triangle.
\end{lemma}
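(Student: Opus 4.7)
The plan is to split on whether the product $ab$ lies in the centre of $G$. Since $a\sim b$ in $\Gamma(G)$, the element $ab=ba$ commutes with both $a$ and $b$. Moreover, $ab\ne a$ and $ab\ne b$, for otherwise $a=1$ or $b=1$, contradicting $a,b\notin Z(G)$. Hence if $ab\notin Z(G)$, then $ab$ is a vertex of $\Gamma(G)$ adjacent to both $a$ and $b$, and $\{a,b,ab\}$ is the desired triangle.

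The interesting case is $ab\in Z(G)$. Here the key claim is that $\C_G(a)=\C_G(b)$. Indeed, setting $z:=ab\in Z(G)$, we have $b=a^{-1}z$, so for any $g\in G$,
\[
gb=bg\iff ga^{-1}z=a^{-1}zg\iff ga^{-1}=a^{-1}g\iff ga=ag,
\]
since $z$ is central. Hence $\C_G(a)=\C_G(b)$, and consequently the neighbourhoods of $a$ and $b$ in $\Gamma(G)$ coincide except for the possible presence of $a$ and $b$ themselves; concretely, the neighbours of $a$ are $\C_G(a)\setminus(Z(G)\cup\{a\})$ and those of $b$ are $\C_G(a)\setminus(Z(G)\cup\{b\})$.

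Now use the hypothesis that $\{a,b\}$ is not an isolated edge: without loss of generality $a$ has some neighbour $c\ne b$. Then $c\in \C_G(a)=\C_G(b)$, $c\notin Z(G)$, and $c\ne b$, so $c\sim b$ as well, producing the triangle $\{a,b,c\}$. The main (and only) obstacle in this argument is spotting the reduction in the central case: once one writes $b=a^{-1}(ab)$ with $ab$ central, the equality of centralisers — and therefore of open neighbourhoods in $\Gamma$ — falls out immediately, and the rest is bookkeeping.
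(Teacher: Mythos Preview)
Your proof is correct and follows essentially the same approach as the paper: both arguments pivot on the element $ab$, noting that it commutes with $a$ and $b$ and is distinct from each, and both use that $ab\in Z(G)$ forces $\C_G(a)=\C_G(b)$. The only difference is organizational --- you split first on whether $ab\in Z(G)$ and then invoke the non-isolated hypothesis, whereas the paper first case-splits on whether some neighbour of $\{a,b\}$ is adjacent to only one of them and then rules out $ab\in Z(G)$; the mathematical content is identical.
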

\begin{proof}
Let $\{a,b\}$ be a non-isolated edge and let $\Gamma_1$ be the connected component containing $\{a,b\}$. Then either every vertex in $\Gamma_1\backslash \{a,b\}$ is at distance one from both $a$ and $b$ and so forms a triangle with $\{a,b\}$, or without loss of generality, there is some vertex $c$ with $a\sim b\sim c$ and $a\not\sim c$. Since $a$ and $b$
    commute,  their product, $ab$ commutes both with $a$ and with $b$. Since $a$ and $b$ are nontrivial and distinct, $\{a,ab,b\}$ are distinct, pairwise commuting elements.
Also, $ab\in Z(G)$ contradicts the fact that $c$ commutes with $b$ but not with $a=(ab)b^{-1}$. Thus
$ab\in G\setminus Z(G)$ forms a triangle with $\{a,b\}$.
\end{proof}

Recall that a bridge in a connected component $\Gamma_1$ of a graph $\Gamma$ is an edge whose deletion
(without removing  vertices) makes $\Gamma_1$  disconnected. Also,  a leaf is a vertex of valency one, thus  the unique edge containing it is a bridge.

\begin{corollary}\label{lem:bridge}
Let $\Gamma$ be  the commuting graph of a finite group $G$. If $\{u,v\}\in\Gamma$ is a bridge in some
connected component then $\{u,v\}$ is an isolated edge.
\end{corollary}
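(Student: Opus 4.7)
The plan is to derive this as an immediate consequence of Lemma~\ref{lem:edgeintriangle} by contrapositive. Suppose $\{u,v\}$ is a bridge in some connected component $\Gamma_1$ of $\Gamma(G)$, and assume for contradiction that $\{u,v\}$ is not an isolated edge. Then $\Gamma_1$ has at least three vertices, so $\{u,v\}$ is a non-isolated edge.

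By Lemma~\ref{lem:edgeintriangle}, every non-isolated edge of $\Gamma(G)$ lies on a triangle. Thus there exists $w \in V(\Gamma)$ with $w \sim u$ and $w \sim v$. But then deleting the edge $\{u,v\}$ from $\Gamma_1$ leaves the path $u \sim w \sim v$ intact, and so $u$ and $v$ remain in the same connected component after deletion. Hence $\{u,v\}$ is not a bridge, contradicting the hypothesis. Therefore $\{u,v\}$ must be an isolated edge.

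The only subtle point is checking that a bridge whose endpoints form an isolated edge is allowed in the statement (i.e., that an isolated edge trivially counts as a bridge of its own component); this is fine, and no obstacle arises. The argument is genuinely a one-line corollary of Lemma~\ref{lem:edgeintriangle}, so there is no hard step.
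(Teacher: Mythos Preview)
Your proof is correct and is exactly the intended argument: the paper states this result as a corollary of Lemma~\ref{lem:edgeintriangle} without giving any separate proof, so the implicit reasoning is precisely the one you wrote out, namely that a non-isolated edge lies on a triangle and therefore cannot be a bridge.
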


\begin{corollary}\label{cor:leaf}
Let $\Gamma$ be a commuting graph of a group and suppose that $u$ is a leaf of $\Gamma$ with unique
neighbour $v$. Then $\{u,v\}$ is an isolated edge.
\end{corollary}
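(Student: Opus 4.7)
The plan is to observe that this corollary is essentially a direct consequence of Corollary~\ref{lem:bridge}, modulo verifying that the edge incident to a leaf is always a bridge. So the strategy is: first reduce to the bridge case, then invoke the earlier result.

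Concretely, suppose $u$ is a leaf of $\Gamma$ with unique neighbour $v$, and let $\Gamma_1$ denote the connected component of $\Gamma$ containing $u$. I would first check that the edge $\{u,v\}$ is a bridge in $\Gamma_1$. Since $u$ has valency one, its only edge is $\{u,v\}$; after deleting this edge (but keeping the vertex $u$), $u$ has no remaining edges and so is disconnected from every other vertex of $\Gamma_1$. Hence $\{u,v\}$ is a bridge of $\Gamma_1$, in the sense used just before Corollary~\ref{lem:bridge}. (If $\Gamma_1 = \{u,v\}$ to begin with, then we are already done; the bridge argument still applies, as deleting the single edge disconnects the two-vertex component into two singletons.)

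Having established that $\{u,v\}$ is a bridge, I would simply apply Corollary~\ref{lem:bridge}, which asserts that every bridge in a connected component of the commuting graph of a finite group is in fact an isolated edge. This immediately yields that $\{u,v\}$ is isolated, which is the desired conclusion. No additional group-theoretic input is needed beyond what is already packaged in Corollary~\ref{lem:bridge} (which in turn rests on Lemma~\ref{lem:edgeintriangle}).

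There is essentially no obstacle here; the only conceptual point worth making explicit is the one-line verification that the unique edge at a leaf is a bridge, a fact that the paper has already flagged in the sentence preceding the statement of Corollary~\ref{lem:bridge}. Thus the proof should amount to little more than a pointer to the preceding result.
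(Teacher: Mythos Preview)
Your proposal is correct and matches the paper's intended argument exactly: the paper gives no explicit proof of Corollary~\ref{cor:leaf}, but the sentence immediately preceding Corollary~\ref{lem:bridge} (``Also, a leaf is a vertex of valency one, thus the unique edge containing it is a bridge'') makes clear that Corollary~\ref{cor:leaf} is meant to follow from Corollary~\ref{lem:bridge} precisely via the observation you spell out.
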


Observe that  Lemma \ref{lem:edgeintriangle} and Corollary~\ref{cor:oddcycle} establish that the Petersen
graph is not the commuting graph of a group nor the commuting graph of a centrefree semigroup because it is
triangle free and contains a $5$-cycle as an induced subgraph.

The lexicographic product of graphs $\Gamma_1$ and $\Gamma_2$ is the graph $\Gamma_1[\Gamma_2]$, with vertex
set $V(\Gamma_1)\times V(\Gamma_2)$, where $(x_1,y_1)$ and $(x_2,y_2)$ form an edge if $(x_1,x_2)\in
E(\Gamma_1)$ or if $x_1=x_2$ and $(y_1,y_2)\in E(\Gamma_2)$. We note the following result of Vahidi and
Talebi~\cite{dihedral}.
\begin{lemma}\label{lem:lexico}\cite{dihedral}
Let $G$ be a group with nontrivial centre of size $k$. Then $\Gamma(G)$ is the lexicographic product
$\Gamma_1[K_k]$, where $\Gamma_1$ is the subgraph of $\Gamma$ induced on a set of representatives of
the nontrivial cosets of $Z(G)$ in $G$.
\end{lemma}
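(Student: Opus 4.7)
The plan is to exhibit an explicit graph isomorphism that reflects the coset structure of $G$ modulo its centre $Z=Z(G)$. Let $m=|G|/k-1$ and fix a transversal $T=\{x_1,\dots,x_m\}$ for the nontrivial cosets of $Z$ in $G$, so that $\Gamma_1$ is by definition the subgraph of $\Gamma(G)$ induced on $T$. Then
$$V(\Gamma(G))=G\setminus Z=\bigsqcup_{i=1}^m x_iZ,$$
each block $x_iZ$ having cardinality $k$. After fixing any bijection between $Z$ and the vertex set of $K_k$, this decomposition yields a candidate bijection $\psi\colon V(\Gamma(G))\to V(\Gamma_1)\times V(K_k)$ sending $x_iz\mapsto(x_i,z)$.

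First I would record the key observation that commutativity in $G\setminus Z$ depends only on the coset of $Z$. For any $z_1,z_2\in Z$, centrality gives
$$(x_iz_1)(x_jz_2)=x_ix_jz_1z_2\quad\text{and}\quad(x_jz_2)(x_iz_1)=x_jx_iz_1z_2,$$
so $x_iz_1$ commutes with $x_jz_2$ in $G$ if and only if $x_ix_j=x_jx_i$. In particular (taking $i=j$), any two distinct elements inside the same block $x_iZ$ automatically commute, so the block induces a $K_k$ in $\Gamma(G)$; and for $i\neq j$, adjacency of $x_iz_1$ and $x_jz_2$ in $\Gamma(G)$ is equivalent to adjacency of $x_i$ and $x_j$ in $\Gamma_1$.

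I would then compare these two rules with the definition of the lexicographic product: $(x_i,z_1)$ and $(x_j,z_2)$ are adjacent in $\Gamma_1[K_k]$ precisely when either $x_i\sim x_j$ in $\Gamma_1$, or $x_i=x_j$ and $z_1\neq z_2$ in $K_k$. The previous paragraph shows that this is exactly the adjacency relation transported through $\psi$, so $\psi$ is a graph isomorphism.

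There is no substantive obstacle here; the only point that warrants care is checking that $\Gamma_1$ is a well-defined induced subgraph of $\Gamma(G)$, i.e.\ that each chosen representative $x_i$ is actually a vertex of $\Gamma(G)$. This holds because $x_iZ\neq Z$ forces $x_i\notin Z$, so $x_i\in V(\Gamma(G))$. Everything else is a bookkeeping consequence of the centrality identity displayed above.
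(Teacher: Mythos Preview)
Your argument is correct: the bijection $x_iz\mapsto(x_i,z)$ together with the centrality identity $(x_iz_1)(x_jz_2)=x_ix_jz_1z_2$ establishes that adjacency in $\Gamma(G)$ depends only on the $Z(G)$-cosets, and this matches the adjacency rule for $\Gamma_1[K_k]$ exactly as you describe. The paper itself does not supply a proof of this lemma; it is simply quoted with a citation to \cite{dihedral}, so there is no in-paper argument to compare against, but your proof is the standard direct one and would be the natural thing to write if a proof were required.
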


The structure of regular commuting graphs was essentially obtained by It\^{o} \cite{ito} and follows from
Lemma \ref{lem:lexico}. Recall  that a finite group whose order is a power of a prime $p$ is called a
$p$-group, and a finite group whose order is not divisible by a prime $p$ is called a $p'$-group.

\begin{lemma}\label{lem:regular}
Let $\Gamma$ be the commuting graph of a finite group $G$ and suppose that $\Gamma$ is regular. Then
$G\cong P\times A$ for some $p$-group $P$ and abelian $p'$-group $A$. Moreover, $\Gamma$ is the
lexicographic product $\Gamma_1[K_{p^a|A|}]$, where $|P|=p^{a+b}$ with $a,b\geq 1$ and $\Gamma_1$ has
order $p^b-1$.
\end{lemma}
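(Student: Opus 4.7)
The strategy is to use regularity of $\Gamma$ to pin down the centraliser sizes in $G$, invoke a classical theorem of It\^o to obtain a direct product decomposition, and then read off the lexicographic structure from Lemma~\ref{lem:lexico}.

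As computed in the proof of Lemma~\ref{centre-of-gp}, the valency of any vertex $v$ of $\Gamma$ is $d(v) = |\C_G(v)| - |Z(G)| - 1$. Regularity therefore forces $|\C_G(v)|$ to equal a fixed constant $c$ for every $v \in G \setminus Z(G)$. By the orbit--stabiliser theorem, every non-central conjugacy class of $G$ then has the same size $|G|/c$, so $G$ has exactly two conjugacy class sizes, namely $1$ and $|G|/c$.

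I would then invoke It\^o's theorem \cite{ito}: a finite group with exactly two conjugacy class sizes is a direct product of a $p$-group and an abelian group. This gives $G \cong P \times A$ with $P$ a nontrivial $p$-group and $A$ abelian; by absorbing the Sylow $p$-subgroup of $A$ into $P$ one may further assume $A$ is a $p'$-group.

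Finally, since $A$ is abelian we have $Z(G) = Z(P) \times A$. Writing $|Z(P)| = p^a$ and $|P/Z(P)| = p^b$ gives $|P| = p^{a+b}$ and $|Z(G)| = p^a|A|$. Here $a \geq 1$ because any nontrivial $p$-group has nontrivial centre, and $b \geq 1$ because $G$ (hence $P$) is non-abelian (otherwise $\Gamma$ would have no vertex). The number of non-trivial cosets of $Z(G)$ in $G$ is $|G/Z(G)| - 1 = |P/Z(P)| - 1 = p^b - 1$, and so Lemma~\ref{lem:lexico} yields $\Gamma = \Gamma_1[K_{p^a|A|}]$ with $|V(\Gamma_1)| = p^b - 1$, as required. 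The main obstacle in this argument is the appeal to It\^o's theorem, which supplies the direct product decomposition; the remaining steps are elementary order counts.
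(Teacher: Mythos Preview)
Your proof is correct and follows essentially the same approach as the paper: you translate regularity into constant centraliser size, obtain two conjugacy class sizes, invoke It\^o's theorem, and then apply Lemma~\ref{lem:lexico}. You spell out a few more details (absorbing the $p$-part of $A$, justifying $a,b\geq 1$, and the coset count), but the argument is identical in substance.
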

\begin{proof}
Let $g\in G$ and let $d$ be the valency of $\Gamma$.  If $g\notin Z(G)$ then $g$ is a vertex of $\Gamma$ and
so has centraliser of order $|Z(G)|+d+1$. Thus the conjugacy classes of $G$ have size $|G|/(|Z(G)|+d+1)$ or
$1$ (for elements in $Z(G)$). The structure of groups with only two conjugacy class sizes was determined by
It\^{o} \cite{ito}, from which we deduce that $G$ is as in the statement of the lemma. Then $Z(G)=Z(P)\times
A$. The result  follows from the fact that  $p$-groups have nontrivial centre and Lemma \ref{lem:lexico}.
\end{proof}

We can now state the first main result of the present section. We say that a group automorphism $\phi$ is
\emph{fixed-point-free} if $x\phi=x$ implies $x=1$. If $x\phi=x$ for $x\ne1$ then $x$ is a nontrivial fixed
point. An automorphism of a group $A$ is referred to as \emph{inversion} if it maps each $a\in A$ to
$a^{-1}$.
\begin{theorem}\label{thm:isolated-vertex}
Suppose that $\Gamma$ is a finite graph with an isolated vertex. Then the following are equivalent.
\begin{enumerate}[label={\emph{(\roman*)}}]
\item $\Gamma$ is the commuting graph of a group.
\item $1\ne|V(\Gamma)|\equiv 1\pmod 4$  and $\Gamma$ has exactly $\frac{|V(\Gamma)|+1}{2}$ isolated
    vertices while the remaining $\frac{|V(\Gamma)|-1}{2}$ vertices form a complete graph.
\end{enumerate}
Moreover, the commuting graph of a group $G$ has an isolated vertex if and only if $G$ is the semidirect
product $A\rtimes \ZZ_2$ for some abelian group $A$ of odd order where $\ZZ_2$ acts on $A$ by inversion.
\end{theorem}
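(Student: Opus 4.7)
The plan is to prove the \emph{moreover} structural characterization first; the equivalence (i) $\Leftrightarrow$ (ii) will then fall out by counting.

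For the forward direction, suppose $v$ is an isolated vertex of $\Gamma = \Gamma(G)$. First I would observe that, since $v$ commutes with no other element of $V(\Gamma) = G \setminus Z(G)$, we have $\C_G(v) = Z(G) \cup \{v\}$ as sets, so $|\C_G(v)| = |Z(G)| + 1$. But $\C_G(v)$ is a subgroup containing $Z(G)$ as a subgroup, and Lagrange forces $|Z(G)|$ to divide $|Z(G)| + 1$. Hence $Z(G) = 1$ and $v$ is an involution. Theorem~\ref{thm1} then says $\Gamma$ is the disjoint union of a clique on $|G|/2 - 1$ vertices and $|G|/2$ isolated vertices. The clique is a connected component of diameter at most one, so by Lemma~\ref{lem:smalldiamconcomp} its union with $\{1\}$ is an abelian subgroup $A$ of $G$ of order $|G|/2$, necessarily normal. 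Since $v \notin A$, this gives $G = A \rtimes \langle v \rangle$.

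Next I would show the action is inversion. Every element of $vA$ lies outside $A$ and outside $Z(G) = \{1\}$, so it must be an isolated vertex, in particular an involution. From $(va)^2 = 1 = v^2$ one obtains $vav^{-1} = a^{-1}$ for every $a \in A$. The requirement $Z(G) = 1$ then translates to the $2$-torsion of $A$ being trivial, so $|A|$ is odd. For the converse direction of the structural claim, a direct check in $G = A \rtimes \langle t \rangle$ with $|A|$ odd and $t$ acting by inversion shows that $(ta)(tb) = (tb)(ta)$ forces $a^2 = b^2$, hence $a = b$ by oddness, while $ta$ commuting with some $b \in A$ forces $b^2 = 1$, hence $b = 1$; therefore $Z(G) = 1$, the set $A \setminus \{1\}$ is a clique, and $tA$ consists of $|A|$ isolated involutions.

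The equivalence (i) $\Leftrightarrow$ (ii) now follows by counting: if $\Gamma = \Gamma(G)$ has an isolated vertex then $|V(\Gamma)| = |G| - 1 = 2|A| - 1$ with $|A|$ odd and $|A| \geq 3$ (since $|A| = 1$ would make $G$ abelian, contradicting existence of an isolated vertex), yielding $|V(\Gamma)| \equiv 1 \pmod 4$ with $|A| = (|V(\Gamma)| + 1)/2$ isolated vertices and $|A| - 1 = (|V(\Gamma)| - 1)/2$ clique vertices. Conversely, any graph satisfying (ii) is realized by $G = \ZZ_m \rtimes \ZZ_2$ with $m = (|V(\Gamma)| + 1)/2$ (which is odd since $|V(\Gamma)| \equiv 1 \pmod 4$) and $\ZZ_2$ acting by inversion. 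The main obstacle I anticipate is the opening deduction that $Z(G) = 1$: one has to spot that $\C_G(v) = Z(G) \cup \{v\}$ is simultaneously a subgroup \emph{and} has order exactly $|Z(G)| + 1$, at which point Lagrange kills the centre. Once that is in hand, the index-$2$ abelian subgroup from Lemma~\ref{lem:smalldiamconcomp} and the involution condition on the coset $vA$ pin down the inversion action with no further subtlety.
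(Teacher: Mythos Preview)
Your argument has a circularity: you invoke Theorem~\ref{thm1} to conclude that the non-isolated vertices form a clique, but in this paper Theorem~\ref{thm1} is an \emph{introductory summary} of Theorem~\ref{thm:isolated-vertex}, not an independently established result. There is no separate proof of Theorem~\ref{thm1} prior to Theorem~\ref{thm:isolated-vertex}; the clique conclusion only emerges \emph{after} one has shown that the complement of the isolated vertices is an abelian subgroup. So the step ``Theorem~\ref{thm1} then says $\Gamma$ is the disjoint union of a clique and isolated vertices'' assumes exactly what must be proved, and without it you cannot apply Lemma~\ref{lem:smalldiamconcomp} (you do not yet know the non-isolated part is a single component, let alone one of diameter at most two).

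The paper's proof fills this gap by a direct counting argument: the conjugacy class of the isolated involution $v$ has size $|G|/2$; for any two such conjugates $v_i,v_j$ the product $v_iv_j$ cannot be isolated (else $v_i$ and $v_j$ would commute); fixing $i$, the $n-1$ products $v_iv_j$ with $j\neq i$ exhaust the $n-1$ non-isolated elements, and from this one checks by hand that $N=\{1\}\cup\{v_iv_j\}$ is closed under multiplication and inversion. Only then does the paper invoke the classical theorem on fixed-point-free involutory automorphisms (Gorenstein, Theorem~1.4) to deduce that $N$ is abelian with $v$ acting by inversion. Your route via Lemma~\ref{lem:smalldiamconcomp} would be a pleasant shortcut---it yields the subgroup and its abelianness simultaneously, bypassing the Gorenstein citation---but it only becomes available \emph{after} one knows the non-isolated vertices form a clique, which is precisely the output, not the input, of the argument. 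The remainder of your proof (oddness of $|A|$, the converse construction, and the counting for (i)$\Leftrightarrow$(ii)) is correct and matches the paper.
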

\begin{proof}
Let $\Gamma$ be a finite graph with isolated vertex $v$ and suppose that $\Gamma$ is the commuting graph of a
group $G$.
 By Lemma~\ref{centre-of-gp}, $|Z(G)|=1$ and so $G$ is a finite group with $V(\Gamma)=G\backslash\{1\}$.
Since $v$ commutes with $v^2$ but $v$ has no neighbours,  we must have $v^2\in Z(G)=1$.  As conjugation by
an element of $G$ induces an automorphism of $\Gamma$,   every element in the conjugacy class of $v$
corresponds to an isolated vertex in $\Gamma$. Since the conjugacy class of $v$ contains $|G|/|\C_G(v)|$
elements,  and since $\C_G(v)=\{1,v\}$ we see that $|G|=2n$ is even and at least $n$ vertices are isolated.
We label them so that the conjugacy class of $v$ consists of $\{v_1,\dots,v_n\}$ where $v_1=v$. In
particular, $v_i^2=1$ for each~$i$.

Now, for distinct indices $i,j$, if $v_iv_j$ is isolated then as above $(v_iv_j)^2=1$, which we
rewrite as  $v_iv_j=v_j^{-1}v_i^{-1}=v_jv_i$, so $v_i$ and $v_j$ commute contradicting the fact that
$v_i$ is isolated. Hence, fixing $i$, we have that $\{v_iv_1,\dots,v_iv_n\}\backslash\{v_i^2\}$
consists of  $n-1$ pairwise distinct and nonisolated vertices. Since $|G\backslash\{v_1,\dots,v_n,1\}|=n-1$, it follows that every vertex in  the set $\hat{N}:=G\backslash\{v_1,\dots,v_n,1\}$ can be written as $v_iv_j$ for some $j$.
Let $N:=\hat{N}\cup\{1\}$. Then for $u,w\in N$ we have $u=v_1v_i$ for some $i$ and $w=v_iv_k$ for some $k$
and hence $uw=v_1v_iv_iv_k=v_1v_k\in N$. Also, $u^{-1}=v_iv_1\in N$, so $N$ is a subgroup of index $2$ in
$G$ and therefore, a normal subgroup.

The map $n\mapsto v^{-1}nv$ is an automorphism of $N$ of order two. Moreover, it is
fixed-point-free   because $\C_G(v)=\{1,v\}$. This implies (see \cite[Theorem 1.4, p.~336]{Gorenstein}) that
$N$ is  abelian   and $v^{-1}nv=n^{-1}$ for all $n\in N$. Therefore, $|N|=n=2k+1$ must be odd (otherwise,
$n^{-1}=n$ for some $n\in N\backslash\{1\}$). Hence $|V(\Gamma)|=|G|-1=2n-1=4k+1$ for some integer $k$, and
$\Gamma$ has exactly $n=\frac{|V(\Gamma)|+1}{2}$ isolated vertices, while the remaining
$|N|-1=\frac{|V(\Gamma)|-1}{2}$ vertices  lie in  the abelian subgroup $N$, so they form a complete induced
subgraph, as claimed in (ii).  Moreover, since $v_1\notin N$ and has order two, it follows that $G=N\rtimes \langle v_1\rangle$.

Conversely, if $A$ is an abelian group of odd order $n=2k+1\ge3$, then $A$ has no
elements of order two, so inversion is  a fixed-point-free automorphism of order two.
This implies that $G:=A\rtimes \ZZ_2$, where $\ZZ_2$ acts on $A$ by inversion, has trivial
centre. It is easily seen that the commuting graph $\Gamma(G)$ has an isolated vertex, corresponding
to the generator of $\ZZ_2$, and by the first part of the proof, $\Gamma(G)$ is a graph with the
properties stated in (ii). This also shows that (ii) implies (i), because if $\Gamma$ is a graph with
properties as in (ii) of order
 $4n+1\ge 5$, then there  exists an abelian group $A$ of odd order  $2n+1$.
\end{proof}

\begin{corollary}
If $\Gamma$ has an isolated vertex and $|V(\Gamma)|=2p-1$  for some odd prime $p$ then there exists,
up to isomorphism, exactly one group such that $\Gamma$ is its commuting graph.
\end{corollary}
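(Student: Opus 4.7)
The plan is to apply Theorem~\ref{thm:isolated-vertex} directly; I do not expect a genuine obstacle, since the corollary is in essence a bookkeeping consequence of that theorem. The idea is that the hypothesis $|V(\Gamma)|=2p-1$ pins down the order of the abelian normal subgroup appearing in the theorem, and primality of $p$ then removes all remaining freedom in its isomorphism type.

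More precisely, I would first invoke the moreover part of Theorem~\ref{thm:isolated-vertex}: any group $G$ whose commuting graph has an isolated vertex satisfies $G\cong A\rtimes\ZZ_2$ with $A$ abelian of odd order and the nontrivial element of $\ZZ_2$ acting on $A$ by inversion. In particular $Z(G)=1$, so $|V(\Gamma)|=|G|-1=2|A|-1$, and the hypothesis $|V(\Gamma)|=2p-1$ forces $|A|=p$. Since $p$ is prime, the only abelian group of order $p$ (up to isomorphism) is the cyclic group $\ZZ_p$, so $A\cong\ZZ_p$; the inversion action of $\ZZ_2$ on $\ZZ_p$ is uniquely determined, so $G$ is forced to be the dihedral group $D_{2p}\cong \ZZ_p\rtimes\ZZ_2$, unique up to isomorphism.

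Existence is then guaranteed by the converse direction of Theorem~\ref{thm:isolated-vertex}: taking $A=\ZZ_p$ yields a group whose commuting graph has the structure described in part (ii) of that theorem, namely $p$ isolated vertices together with a clique on $p-1$ vertices. Since $|V(\Gamma)|=2p-1\equiv 1\pmod 4$ (as $p$ is odd), Theorem~\ref{thm:isolated-vertex}(ii) forces $\Gamma$ itself to have exactly this shape, so $\Gamma(D_{2p})\cong\Gamma$, completing the argument.
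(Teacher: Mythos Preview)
Your uniqueness argument is correct and is exactly the derivation the paper intends (the paper states the corollary without proof, as immediate from Theorem~\ref{thm:isolated-vertex}): the theorem forces any realising group to be $A\rtimes\ZZ_2$ with $|A|=p$, whence $A\cong\ZZ_p$ and $G\cong D_{2p}$.

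One slip to flag in your final paragraph. The sentence ``since $|V(\Gamma)|=2p-1\equiv 1\pmod 4$, Theorem~\ref{thm:isolated-vertex}(ii) forces $\Gamma$ itself to have exactly this shape'' is not valid: the congruence is only a fragment of condition~(ii), and a graph on $2p-1$ vertices with an isolated vertex need not have the prescribed structure (e.g.\ take $2p-1$ isolated vertices, which is not realisable over groups). The corollary is to be read with the tacit hypothesis that $\Gamma$ is the commuting graph of some group---equivalently, that $\Gamma$ already satisfies~(ii)---so existence is part of the hypothesis and only uniqueness requires argument. With that reading your proof is complete.
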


We also have the following corollary which will be useful later.
\begin{corollary}\label{cor:fixed-pts}
If $N$ is a nontrivial group and $\phi\colon S_3\to \mathrm{Aut}(N)$ is a group homomorphism, then there
exists $a\in S_3\backslash\{1\}$ such that $(a\phi)$ has a nontrivial fixed point in $N$.

Moreover, if $v\in S_3$ is a $3$-cycle and $(v\phi)\in\mathrm{Aut}(N)$ is fixed-point-free, then for
every involution $a\in S_3\backslash\{1\}$, $(a\phi)$ has a nontrivial fixed point in~$N$.
\end{corollary}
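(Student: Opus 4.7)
The plan is to analyze $\ker\phi$, which is a normal subgroup of $S_3$ and therefore equals $\{1\}$, $A_3$, or $S_3$. If $\ker\phi$ is nontrivial, pick any $a\in\ker\phi\setminus\{1\}$: then $a\phi=\mathrm{id}_N$ fixes every element of $N$, so every nontrivial element of $N$ (which exists since $N\neq 1$) is a nontrivial fixed point of $a\phi$. This disposes of the first statement except when $\phi$ is injective.

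For the injective case, I would argue by contradiction. Suppose every $a\in S_3\setminus\{1\}$ acts fixed-point-freely on $N$. Pick two distinct involutions $t,t'\in S_3$; each of $t\phi,t'\phi$ is then a fixed-point-free automorphism of order exactly $2$ (order $2$ since $\phi$ is injective and $t,t'$ are involutions). The same Gorenstein-type result invoked in the proof of Theorem~\ref{thm:isolated-vertex} (\cite[Theorem 1.4, p.~336]{Gorenstein}) then forces $N$ to be abelian and both $t\phi$ and $t'\phi$ to coincide with the inversion map $n\mapsto n^{-1}$. Hence $t\phi=t'\phi$, contradicting injectivity of $\phi$. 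This completes the proof of the first statement.

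For the second statement, assume $v\phi$ is fixed-point-free, pick any involution $a\in S_3$, and suppose for contradiction that $a\phi$ has no nontrivial fixed point. Since $N\neq 1$, $a\phi$ is not the identity, hence $a\phi$ has order exactly $2$ and is fixed-point-free. By Gorenstein again, $N$ is abelian and $a\phi$ is the inversion map. The key observation is that for abelian $N$ the inversion map is a \emph{central} element of $\mathrm{Aut}(N)$: indeed, for any $\psi\in\mathrm{Aut}(N)$ and $n\in N$ one has $\psi(n^{-1})=\psi(n)^{-1}$. Consequently
\[
(vav^{-1})\phi \;=\; (v\phi)\,(a\phi)\,(v\phi)^{-1} \;=\; a\phi,
\]
so $vav^{-1}a^{-1}\in\ker\phi$. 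In $S_3$, however, $vav^{-1}$ is an involution distinct from $a$ (no involution in $S_3$ commutes with a $3$-cycle), so $vav^{-1}a^{-1}=vav^{-1}a$ is a product of two distinct involutions, which is a $3$-cycle. Thus $\ker\phi$ contains a $3$-cycle, hence contains $A_3\ni v$, giving $v\phi=\mathrm{id}_N$. This contradicts the assumption that $v\phi$ is fixed-point-free on the nontrivial group $N$.

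The main obstacle I expect is simply spotting the right commutation trick in the second part: everything hinges on the centrality of inversion in $\mathrm{Aut}(N)$ for abelian $N$, which converts the hypothesis on the involution $a$ into a conclusion about conjugate involutions, and via the structure of $S_3$ forces a $3$-cycle into $\ker\phi$. The rest is a routine case split on $\ker\phi$ together with one appeal to the cited theorem of Gorenstein that underlies Theorem~\ref{thm:isolated-vertex}.
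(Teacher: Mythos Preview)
Your argument is correct, and it takes a genuinely different route from the paper's. The paper forms the semidirect product $G=N\rtimes_\phi S_3$ and reads off the conclusion from the commuting-graph machinery: if $v\phi$ is fixed-point-free then $\{v,v^2\}$ is an isolated edge in $\Gamma(G)$, and if in addition some involution $g$ has $g\phi$ fixed-point-free then $g$ is an isolated vertex; but Theorem~\ref{thm:isolated-vertex} forbids an isolated edge and an isolated vertex coexisting unless $G\cong S_3$, i.e.\ $N=1$. Your proof instead stays entirely inside $\Aut(N)$: you invoke \cite[Theorem~1.4, p.~336]{Gorenstein} directly to force any fixed-point-free involutory $a\phi$ to be inversion on an abelian $N$, and then exploit that inversion is central in $\Aut(N)$ to push a $3$-cycle into $\ker\phi$. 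Your approach is more self-contained and would work outside the context of this paper; the paper's approach has the virtue of illustrating the corollary as a genuine consequence of the commuting-graph classification just obtained, which is presumably why it is stated as a corollary. Both arguments ultimately rest on the same Gorenstein result (the paper's via Theorem~\ref{thm:isolated-vertex}), so neither is strictly more elementary in its external dependencies.
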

\begin{proof}
Form the semidirect product $G:=N\rtimes_\phi S_3 $. If $v=(123)\in S_3$ is a $3$-cycle and $v\phi$  is a fixed-point-free  automorphism of $N$, then $\C_G(v)=\{1,v,v^2\}$ in which case the commuting
graph $\Gamma$ of $G$
 contains an isolated edge
$\{v,v^2\}$. Choose any involution $g\in S_3$. If $(g\phi)$ is  also a fixed-point-free automorphism, then
 $\C_G(g)=\{1,g\}$, so $\{g\}$ is an isolated vertex in $\Gamma$. Hence, $\Gamma$ contains an
isolated vertex and an isolated edge. It easily follows from Theorem~\ref{thm:isolated-vertex} that $G=S_3$,
a contradiction.
\end{proof}

We next study  groups whose commuting graph has an isolated edge. We will not be able to give as precise a
description of the groups in this case as we are only aware of a structural description of  nilpotent groups
admitting fixed-point-free automorphisms of order three, rather than a complete classification.

\begin{lemma}\label{lem:centre-in-case-of-an-edge}
Let $G$ be a finite group  whose commuting graph $\Gamma=\Gamma(G)$ contains an isolated edge $\{v,w\}$.
Then $Z(G)$ contains at most two elements. Moreover: \medskip\\
(i) If $|Z(G)|=1$ then $|v|=3$. Also, $w=v ^2$ and $\C_G(v )=\langle v\rangle$.\smallskip\\
(ii) If $|Z(G)|=2$ then either

\begin{enumerate}[label=\emph{(\alph*)}]
 \item  $|v|=4$. Also, $ Z(G)=\{1,v^2\}$, $w=v^3$, and  $\C_G(v)=\langle v\rangle\cong\ZZ_4$, or
 \item $|v|=2$. Also, $ Z(G)=\{1,z\}$, $w=zv$,  and $\C_G(v)=\langle v,z\rangle\cong\ZZ_2\times\ZZ_2$.
\end{enumerate}
\end{lemma}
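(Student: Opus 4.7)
The plan is to extract everything from the observation that an isolated edge $\{v,w\}$ forces $\C_G(v)$ to be as small as possible. Specifically, the neighbours of $v$ in $\Gamma$ are exactly $\C_G(v)\setminus(Z(G)\cup\{v\})$, and since the only neighbour is $w$, I get the disjoint union
\[
\C_G(v)=Z(G)\cup\{v,w\},\qquad |\C_G(v)|=|Z(G)|+2.
\]
Because $d(v)=1$, Lemma~\ref{centre-of-gp} then forces $|Z(G)|$ to divide $2$, giving $|Z(G)|\in\{1,2\}$. This is the bulk of the first statement.

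Next I would analyse $\C_G(v)$ case by case. In case (i), $|Z(G)|=1$ makes $\C_G(v)$ a group of order $3$, hence cyclic; so $|v|=3$, $\C_G(v)=\langle v\rangle$, and the unique non-identity element of $\C_G(v)$ other than $v$ is $v^2$, forcing $w=v^2$. In case (ii), $|Z(G)|=2$ makes $\C_G(v)$ a group of order $4$, so isomorphic either to $\ZZ_4$ or to $\ZZ_2\times\ZZ_2$; write $Z(G)=\{1,z\}$.

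If $\C_G(v)\cong\ZZ_4$, then $\C_G(v)=\langle v\rangle$ has exactly one element of order $2$, namely $v^2$; since $z\in Z(G)\subseteq \C_G(v)$ is an involution and $v,v^3$ lie outside $Z(G)$ (they are vertices of $\Gamma$), we must have $z=v^2$, and then $w$ is the remaining element, $w=v^3$. This gives subcase (a). If instead $\C_G(v)\cong\ZZ_2\times\ZZ_2$, every non-identity element has order $2$, so $|v|=2$, and $\C_G(v)=\{1,v,z,vz\}$; since $w\notin\{1,v,z\}$ we conclude $w=zv$ and $\C_G(v)=\langle v,z\rangle$, giving subcase (b).

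There is essentially no obstacle here: once the equality $|\C_G(v)|=|Z(G)|+2$ is written down, everything reduces to classifying groups of order $3$ or $4$ and identifying where $v$, $w$ and the generator of $Z(G)$ must sit inside them. The only small point to keep track of is that in each case $w$ has to be pinned down as the unique element of $\C_G(v)$ that is neither in $Z(G)$ nor equal to $v$, which is what dictates the explicit formulas $w=v^2$, $w=v^3$ and $w=zv$.
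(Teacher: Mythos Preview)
Your proof is correct and follows essentially the same approach as the paper: both use the valency-one condition together with Lemma~\ref{centre-of-gp} to force $|Z(G)|\le 2$, and then analyse the group $\C_G(v)$ of order $|Z(G)|+2$. The only cosmetic difference is that in case~(ii) the paper argues directly about $v^2$ (showing $v^2\in\{1,z\}$), whereas you invoke the classification of groups of order~$4$; your one quick step ``$\C_G(v)\cong\ZZ_4\Rightarrow \C_G(v)=\langle v\rangle$'' is justified by the very observation you make immediately afterward (the unique involution in $\C_G(v)$ must be $z\neq v$, so $|v|=4$).
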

\begin{proof}
Each vertex in an isolated edge has valency one, so the claim about the size of the centre of $G$
follows from Lemma~\ref{centre-of-gp}. Assume first $Z(G)$ is trivial. Then $\langle v\rangle
\leqslant \C_G(v)=\{v,w\}\cup Z(G)=\{1,v,w\}$. Thus $v $ has order three, and (i) follows.

Assume next that $Z(G)=\{1,z\}$ has two elements. Then $z^2=1$ and
 $|G|=|V(\Gamma)|+2$ and
clearly, $zv$ is noncentral, but commutes with $v$. Hence, $w=zv$. Also, $v^2$ commutes with $v$, so
either $v^2=w=zv$ or $v^2\in Z(G)$. The former possibility contradicts the fact that $v$ is
noncentral, so
$$v^2\in\{1,z\}.$$
If $v^2=1$ then $\C_G(v)=\{1,v,z,zv\}\cong\ZZ_2\times\ZZ_2$. However, if  $v^2=z$ then
$\C_G(v)=\{1,v,v^2,v^3\}\cong\ZZ_4$ and so $w=v^3=zv$.
\end{proof}

We can now combine Lemma \ref{lem:centre-in-case-of-an-edge} with several group-theoretic results to
obtain a characterisation of centrefree groups whose commuting graph has an isolated edge. Note that,
as additive groups, $\ZZ_2^4$ is isomorphic to the  $\GF(4)^2$ and thus has a natural $\SL(2,4)$ module
structure.
\begin{lemma}\label{lem:Gtrivialcentre}
Let $G$ be a finite group with trivial centre whose commuting graph has an isolated edge. Then one of
the following holds:
\begin{enumerate}[label={\emph{(\roman*)}}]
 \item  $G\cong N\rtimes \ZZ_3$ or $G\cong N\rtimes S_3$ where $N$ is a nilpotent group of
     nilpotency class at most two with $|N|\equiv 1\pmod 3$, and each element of $G$ of order three
     acts as a fixed-point-free automorphism of~$N$.
 \item $G\cong N\rtimes A_5$ where $N$ is the direct product of copies of $\ZZ_2^4$, each viewed as the
     natural module for $\SL(2,4)\cong A_5$.

 \item $G\cong \PSL(2,7)$.
\end{enumerate}
\end{lemma}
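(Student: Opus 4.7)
The plan is to apply Lemma~\ref{lem:centre-in-case-of-an-edge}(i) and the Arad--Herfort classification \cite{AH} of groups with a CC-subgroup. By Lemma~\ref{lem:centre-in-case-of-an-edge}(i), any isolated edge of $\Gamma(G)$ is $\{v,v^2\}$ for some $v\in G$ of order $3$ with $\C_G(v)=\langle v\rangle$, so $\langle v\rangle$ is a CC-subgroup of $G$ of order $3$. Moreover, if $P$ is a Sylow $3$-subgroup containing $v$, then $Z(P)$ centralizes $v$, forcing $Z(P)\leqslant\langle v\rangle$; hence $v\in Z(P)$ and $P\leqslant\C_G(v)=\langle v\rangle$, so $\langle v\rangle$ is itself a Sylow $3$-subgroup. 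Then $N_G(\langle v\rangle)/\langle v\rangle\hookrightarrow\Aut(\ZZ_3)\cong\ZZ_2$, and since any coset representative of $\langle v\rangle$ in $N_G(\langle v\rangle)\setminus\langle v\rangle$ must invert $v$ and so has even order, either $N_G(\langle v\rangle)=\langle v\rangle$ or $N_G(\langle v\rangle)\cong S_3$.

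When $N_G(\langle v\rangle)=\langle v\rangle$, Burnside's transfer theorem produces a normal $3$-complement $N$, giving $G=N\rtimes\langle v\rangle$. Since $\C_G(v)\cap N=1$, the element $v$ acts without fixed points on $N$, so by the classical Thompson--Higman theorems $N$ is nilpotent of class at most $2$, and the partition of $N\setminus\{1\}$ into $\langle v\rangle$-orbits of size $3$ gives $|N|\equiv 1\pmod 3$. This is the $\ZZ_3$-option in~(i). When $N_G(\langle v\rangle)\cong S_3$, I would invoke the Arad--Herfort classification: either $G$ is a Frobenius-like split extension with Hall $3'$-kernel $N$, in which case the same Thompson--Higman argument applied to the order-$3$ elements of $S_3$ (and noting by Corollary~\ref{cor:fixed-pts} that the involutions can, and in general will, have nontrivial fixed points on~$N$) gives $G\cong N\rtimes S_3$ as in the $S_3$-option of~(i), or $G$ falls into the non-soluble part of the Arad--Herfort list.

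In the non-soluble case, every Sylow $3$-subgroup of $G$ remains self-centralizing of order $3$ with normalizer $S_3$. Scanning the Arad--Herfort list, the Suzuki groups are excluded (they have order coprime to $3$), candidates with Sylow $3$-subgroup of order at least~$9$ are excluded, and among $\PSL(2,q)$ a direct check leaves only $\PSL(2,5)\cong A_5$ and $\PSL(2,7)$. Hence $G$ has a normal $3'$-subgroup $N$ with $G/N\cong A_5$ or $G/N\cong\PSL(2,7)$. For $G/N\cong\PSL(2,7)$, the requirement that $\langle v\rangle$ be self-centralizing in $G$, together with Corollary~\ref{cor:fixed-pts} applied to an $S_3$-subgroup of $\PSL(2,7)$, forces $N=1$ and yields (iii). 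For $G/N\cong A_5$, the order-$3$ elements act fixed-point-freely on $N$, so $|N|$ is a $2$-power; an elementary $\GF(2)A_5$-module analysis---using that the only irreducible $\GF(2)A_5$-module on which the Sylow $3$-subgroup acts without fixed points is the $4$-dimensional natural $\SL(2,4)$-module, and that the relevant extensions split---shows that $N$ is a direct sum of natural $\SL(2,4)$-modules, yielding~(ii).

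The hard part will be the non-soluble case: extracting from the Arad--Herfort list that the only simple possibilities are $A_5$ and $\PSL(2,7)$, and then carrying out the $\GF(2)A_5$-module analysis to force $N$ to be a direct sum of natural modules. The remainder is a sequence of standard Sylow-theoretic, Burnside-transfer and Thompson--Higman fixed-point-free arguments.
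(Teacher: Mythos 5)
Your overall architecture (self-centralising subgroup $\langle v\rangle$ of order $3$ from Lemma~\ref{lem:centre-in-case-of-an-edge}, then a classification theorem, then refinement of the normal subgroup $N$) matches the paper's, and several of your sub-arguments are sound and even more elementary than the paper's: the Sylow argument showing $\langle v\rangle\in\mathrm{Syl}_3(G)$, the Burnside-transfer treatment of the case $\N_G(\langle v\rangle)=\langle v\rangle$, and the use of the Neumann/Higman/Thompson theorems to get nilpotency class at most two. However, the classification input you choose is the wrong tool, and it leaves a genuine gap. The paper invokes Feit--Thompson \cite{Feit-Thompson}, which classifies exactly the groups with a self-centralising subgroup of order $3$ and delivers the trichotomy directly, \emph{including} the fact that in the $A_5$ case the normal subgroup $N$ is a $2$-group. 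You instead invoke Arad--Herfort \cite{AH} and propose to ``scan the list,'' but the groups that actually occur here --- $N\rtimes S_3$ and $N\rtimes A_5$ with $N\neq 1$ --- are \emph{not} Frobenius groups (the involutions have nontrivial fixed points on $N$, as Corollary~\ref{cor:fixed-pts} shows), so they do not sit in the Frobenius case of that classification, and ``Frobenius-like split extension with Hall $3'$-kernel'' is not one of its cases. You would have to identify precisely which clause of \cite{AH} these groups occupy and extract the normal $3'$-subgroup with quotient $A_5$ or $\PSL(2,7)$ from it; this is not done, and it is the heart of the matter.

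The sharpest concrete error is the sentence ``the order-$3$ elements act fixed-point-freely on $N$, so $|N|$ is a $2$-power.'' A fixed-point-free automorphism of order $3$ forces only $|N|\equiv 1\pmod 3$ and nilpotency of class at most two; it does not exclude odd primes ($\ZZ_7$ admits such an automorphism). That $N$ is a $2$-group in the $A_5$ case is part of the \emph{conclusion} of Feit--Thompson's theorem, not a consequence of the fixed-point-free action, so as written this step fails. Finally, the $\GF(2)A_5$-module analysis is only asserted: you would need to verify that the $\Omega_4^-(2)$-module admits fixed points for order-$3$ elements (true, but requires a Brauer-character or direct computation), that $N$ is elementary abelian, and that the extension of $A_5$ by $N$ splits --- and here some care is needed, since $H^1(\SL(2,4),V_{\mathrm{nat}})\neq 0$ is a famously nonzero cohomology group, so any splitting/complement claim must be pinned to the correct cohomological statement. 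The paper outsources all of this finer structure to Mazurov \cite{Mazurov}; your proposal needs either that reference or the omitted computations to be complete.
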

\begin{proof}
Let $\{v,w\}$ be an isolated edge. Then by Lemma \ref{lem:centre-in-case-of-an-edge}, $\langle
v\rangle$ is a self-centralising subgroup of $G$ of order three. Thus by Feit and
Thompson~\cite{Feit-Thompson} we have one of the following:
\begin{enumerate}
 \item[(a)] $G$ has a normal nilpotent subgroup $N$ such that $G/N\cong \ZZ_3$ or $S_3$.
 \item[(b)] $G$ has a normal 2-subgroup $N$ such that $G/N\cong A_5$.
 \item[(c)] $G\cong \PSL(2,7)$.
\end{enumerate}
Note that $\C_G(v)=\langle v\rangle\cong\ZZ_3$ implies that its normalizer, $\N_G(\langle v\rangle)$
either fixes elements in $\C_G(v)$ or swaps $v$ and $v^2$, so  $\N_G(\langle v\rangle)\cong \ZZ_3$ or
$S_3$.

In case (a) we only need to consider $N\ne1$. Then $N$ has a nontrivial centre and as $\langle v\rangle$ is
self-centralising we must have either $N=\langle v\rangle$ or $N\cap \langle v\rangle=1$. The first option
is not possible, as it would imply that a Sylow 3-subgroup of $G$ has order at least 9, contradicting the
fact that $\langle v\rangle$ is self-centralising. Thus $N\cap \langle v\rangle=1$ and so conjugation by $v$
is a fixed-point-free automorphism of $N$. Then, considering the orbits of this action, $|N|\equiv 1\pmod
3$. Consequently, $\langle v\rangle$ is a Sylow 3-subgroup of $G$ and hence all elements of $G$ of order
three are conjugate to either $v$ or $v^{-1}$, and so act fixed-point-freely on $N$. Moreover, as
$\N_G(\langle v\rangle)\cap N\norml \N_G(\langle v\rangle)$ and $v\notin N$ it follows that $\N_G(\langle
v\rangle)\cap N =1$. By Mazurov, \cite[Theorem p.29]{Mazurov} we have that $G=N\N_G(\langle v\rangle)$ and
so either $G\cong N\rtimes \ZZ_3$ or $G\cong N\rtimes S_3$. The additional structure for $N$ and $G$ in
cases (a) and (b) follows from Mazurov \cite[Theorem p. 29 and Lemma 9 p. 33]{Mazurov}.
\end{proof}

We note that not every group appearing in (a) and (b) of the proof of Lemma \ref{lem:Gtrivialcentre} has
a self-centralising subgroup of order three.
 For example, the dihedral group $D_{12}$ of order $12$ contains a normal cyclic (hence nilpotent) subgroup $N$ of order two such that
$G/N$ is isomorphic to $S_3$. However, the unique subgroup of order three in $D_{12}$ is not
self-centralising.

We now investigate the structure of commuting graphs in each of the cases given by Lemma
\ref{lem:Gtrivialcentre}. We denote by $nK_i+m K_j$ a disjoint union of $n$ copies of the complete graph
$K_i$ and $m$ copies of the complete graph $K_j$.

\begin{theorem}\label{thm:14}
Let $\Gamma$ be the commuting graph of a finite group $G$ with trivial centre and suppose that $\Gamma$  has
an isolated edge. Then one of the following holds:
\begin{enumerate}[label=\emph{(\roman*)}]
\item\label{triv1} $G\cong S_3$ and  $\Gamma=3K_1+K_2$.
\item $G\cong A_5$ and $\Gamma=10K_2+5K_3+6K_4$.
\item $G\cong \PSL(2,7)$ and $\Gamma=28 K_2+8 K_6+\Delta$, where $\Delta$ is a connected component
    on $63$ vertices with diameter five.
 \item\label{triv4}  $G\cong N\rtimes \ZZ_3$ and $\Gamma=(\frac{|V(\Gamma)|+1}{3})\,K_2+\Delta$,
     where $\Delta$ is a  connected component of size $\frac{|V(\Gamma)|-2}{3}$ containing a
     vertex adjacent to all other vertices in $\Delta$.
\item  $G\cong N\rtimes S_3$ and $\Gamma=(\frac{|V(\Gamma)|+1}{6}) \, K_2+\Delta$, where $\Delta$
    is a connected component of diameter three.
\item $G\cong N\rtimes A_5$ and $\Gamma=(\frac{|V(\Gamma)|+1}{6})\, K_2+
    (\frac{|V(\Gamma)|+1}{10})\, K_4+ \Delta$, where $\Delta$ is a connected component of diameter
    three which contains a clique $C$ of size $\frac{|V(\Gamma)|+1}{60}$ and each element of $\Delta$
    is adjacent to an element of~$C$.
\end{enumerate}
In the last three cases the structure of $N$ is given in Lemma \ref{lem:Gtrivialcentre}.
\end{theorem}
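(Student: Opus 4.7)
The approach is to apply Lemma~\ref{lem:Gtrivialcentre} to split into the three structural families for $G$, and then in each family decompose $\Gamma(G)$ by counting self-centralizing cyclic subgroups of order three (these produce the isolated $K_2$ components by Lemma~\ref{lem:centre-in-case-of-an-edge}(i)) and self-centralizing cyclic subgroups of larger prime order (these produce the remaining isolated cliques).

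For the three ``base cases'' in which $N=1$, I would compute centralizers directly. The graph $\Gamma(S_3)$ is $3K_1+K_2$ by inspection. For $G=A_5$, the centralizer of an element of order $2$, $3$, $5$ is respectively a Klein four-group, a cyclic group of order $3$, and a cyclic group of order $5$, which together with the conjugacy class sizes $15$, $20$, $24$ yields $10K_2+5K_3+6K_4$. For $G=\PSL(2,7)$ the centralizers of order-$3$ and order-$7$ elements are cyclic of those orders, producing $28$ isolated edges and eight copies of $K_6$; the $21$ involutions together with the $42$ elements of order $4$ form a single component $\Delta$ on $63$ vertices. The upper bound $\diam(\Delta)\le 5$ is immediate from Theorem~\ref{thm2}, and equality can be exhibited by choosing two involutions lying in distinct, non-conjugate maximal subgroups isomorphic to $S_4$ and verifying directly in $\PSL(2,7)$ that no shorter path exists.

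For the three infinite families I would use the Sylow theorems together with the fixed-point-free action of any order-three element on $N$ to count the Sylow $3$-subgroups of $G$ (each self-centralizing), obtaining the claimed number of $K_2$ components in (iv), (v) and (vi). In (vi) an analogous count using fixed-point-free action of order-five elements (which follows from the $\SL(2,4)$-module structure, since elements of order $5$ act irreducibly on each $\GF(4)^2$ factor) gives the $K_4$ components. For the remaining component $\Delta$: in (iv) the vertices are exactly $N\setminus\{1\}$, and since $N$ is nilpotent, any element of $Z(N)\setminus\{1\}$ commutes with every other vertex of $\Delta$; in (v) and (vi) we additionally apply Corollary~\ref{cor:fixed-pts} to the induced action on the characteristic subgroup $Z(N)$ to conclude that each complementary involution $t$ centralises a nontrivial element of $Z(N)$, which then yields the diameter-three bound via paths of the form $t\sim z_t\sim z_{t'}\sim t'$ through $Z(N)$.

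The principal obstacle is case (vi), where one must exhibit an explicit clique $C\subseteq\Delta$ of size $(|V(\Gamma)|+1)/60$ and verify that every vertex of $\Delta$ is adjacent to some element of $C$. Since $\C_G(N)=N$ the clique cannot be found inside $N\setminus\{1\}$ alone, so the argument must use the $\SL(2,4)$-module structure precisely: each involution of the $A_5$-complement fixes a one-dimensional $\GF(4)$-subspace on each $\ZZ_2^4$-factor, and the right choice of $C$ comes from aligning these fixed subspaces with a carefully chosen complementary element. A secondary but more routine obstacle is upgrading the Theorem~\ref{thm2} bound to diameter \emph{exactly} five in case (iii), which requires producing an explicit geodesic of length five in $\PSL(2,7)$ rather than relying on the generic bound.
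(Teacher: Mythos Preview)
Your overall architecture matches the paper's: apply Lemma~\ref{lem:Gtrivialcentre}, count Sylow $3$-subgroups to get the $K_2$ components, and analyse the remainder $\Delta$ in each family. The difficulties you single out, however, are not the real ones, and in case~(vi) you have a genuine misconception.

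\medskip
\textbf{Case (vi): $N$ is abelian.} By Lemma~\ref{lem:Gtrivialcentre}(ii), $N$ is a direct product of copies of $\ZZ_2^4$; in particular $N$ is elementary abelian. Hence $N\setminus\{1\}$ is already a clique of size $|N|-1=\frac{|V(\Gamma)|+1}{60}-1$, and this is exactly the clique $C$ the paper uses (the stated size is off by one, but this is cosmetic). Your assertion that ``since $\C_G(N)=N$ the clique cannot be found inside $N\setminus\{1\}$ alone'' is a non sequitur: the fact that no element outside $N$ centralises all of $N$ does not prevent $N\setminus\{1\}$ from being a clique, nor does it prevent elements of $\Delta\setminus N$ from being adjacent to \emph{some} element of $N\setminus\{1\}$. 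No alignment of fixed subspaces or choice of complementary element is needed.

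Relatedly, Corollary~\ref{cor:fixed-pts} concerns homomorphisms $S_3\to\Aut(N)$ and does not apply to the $A_5$-action in case~(vi). The correct reason each involution $t\in A_5$ centralises a nontrivial element of $N$ is simply that $N$ is a vector space over $\GF(2)$: an involution in $\GL(N)$ satisfies $(t-1)^2=t^2-1=0$, so $1$ is its only eigenvalue and the fixed space is nonzero. (Equivalently, $|N|$ is even.) This immediately gives $\diam(\Delta)\le 3$.

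\medskip
\textbf{Circularity for $\PSL(2,7)$.} You invoke Theorem~\ref{thm2} to bound $\diam(\Delta)\le 5$. But Theorem~\ref{thm2} is a summary whose content in the trivial-centre case \emph{is} Theorem~\ref{thm:14}; citing it here is circular. The paper simply treats the $\PSL(2,7)$ case as a direct computation, and you should do the same (e.g.\ list centralisers of involutions and elements of order~$4$).

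\medskip
\textbf{Lower bounds on diameter.} In cases (v) and (vi) you establish $\diam(\Delta)\le 3$ but never argue $\diam(\Delta)\ge 3$. The paper does this by exhibiting two involutions $g_1,g_2$ in the complement with $\C_G(g_1)\cap\C_G(g_2)\subseteq Z(G)=1$: in case~(v) one takes distinct involutions in $S_3$ and uses that $g_1g_2$ has order~$3$ and acts fixed-point-freely on $N$; in case~(vi) one takes two involutions generating a $D_{10}$ inside $A_5$. Without this, the claim ``diameter three'' is only half proved.
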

\begin{proof}
Let $\{v,w\}$ be an isolated edge. By Lemma \ref{lem:centre-in-case-of-an-edge}, $|v|=3$ and $w=v^2$.
The possibilities for $G$ are listed in Lemma \ref{lem:Gtrivialcentre}. If $G\cong S_3, A_5$ or
$\PSL(2,7)$ then we have one of the first three cases.

Suppose that $G\cong N\rtimes \ZZ_3$ or $N\rtimes S_3$ for some nontrivial nilpotent group $N$ with
properties as in (1) of Lemma  \ref{lem:Gtrivialcentre}. By Lemma \ref{lem:Gtrivialcentre}, we know that
$\C_G(v)=\langle v\rangle\cong \ZZ_3$ is a Sylow 3-subgroup of $G$   and so there are precisely
$$|G|/|\N_G(\langle v\rangle)|$$ isolated edges. Moreover, $\N_G(\langle v\rangle)\cong \ZZ_3$ or $S_3$ and
$\N_G(\langle v\rangle)\cap N=1$. Thus $G=N\rtimes \N_G(\langle v\rangle)$ and the number of isolated edges
equals $|G|/|\N_G(\langle v\rangle)|=(|V(\Gamma)|+1)/3$ when $G/N\cong \ZZ_3$, and $|G|/|\N_G(\langle
v\rangle)|=(|V(\Gamma)|+1)/6$ when $G/N\cong S_3$.

If $G/N\cong \ZZ_3$, then these cover all the elements of $G$ not in $N$. Since $N$ is nilpotent, it has a
nontrivial centre and so the elements of $N\backslash \{1\}$ all have at least one common neighbour. This
gives us case \emph{\ref{triv4}}.

Let $G/N\cong S_3$. Conjugation by $v$ acts fixed-point-freely on $N$ hence also  on $Z(N)$, so Corollary
\ref{cor:fixed-pts} implies that  each involution $g\in \N_G(\langle v\rangle)\cong S_3$ centralises a
nontrivial element of $Z(N)$. Thus each $ng\in Ng$ centralises some $z\in Z(N)\backslash\{1\}$ and  so
$$\Delta=\big(N\cup
\bigcup_{g\in \N_G(\langle v\rangle)\atop |g|=2}Ng\big)\setminus\{1\}$$
 forms a
connected component of diameter at most three with  $|\Delta|=4|N|-1=4\frac{|V(\Gamma)|+1}{6}-1$. Let
$g_1,g_2$ be distinct involutions in $\N_G(\langle v\rangle)$ and suppose that there exists $x\in
\C_G(g_1)\cap \C_G(g_2)$. Then $x=nh$ for some $n\in N$ and $h\in \N_G(\langle v\rangle)$. Now
$nh=(nh)^{g_i}=n^{g_i}h^{g_i}$ with $n^{g_i}\in N$ and $h^{g_i}\in \N_G(\langle v\rangle)$. Thus both $h$
and $n$ are centralised by $g_1$ and $g_2$. Hence $h=1$ and $n$ is centralised by $g_1g_2$. However,
$g_1g_2$ is a nontrivial element of $\langle v\rangle$, which acts fixed-point-freely on $N$, a
contradiction. Thus the diameter of $\Delta$ is three.

Finally suppose that $G\cong N\rtimes H$ with  $H\cong A_5\cong\SL(2,4)$ and with $N=\bigoplus_1^t\ZZ_2^4$
the direct sum of $t$ copies of the natural module $\ZZ_2^4\cong \GF(4)^2$  for $\SL(2,4)$. Then $\langle
v\rangle$ is a Sylow 3-subgroup of $G$ and so there are precisely $|G|/|\N_G(\langle
v\rangle)|=(|V(\Gamma)|+1)/6$ isolated edges (use that $S_3\subseteq H$, so $v$ and $v^2$ are conjugate).
Let $g\in G$ have order five. By Sylow's Theorem, $g$ is conjugate to an element of $H$, so we may assume
that $g\in H$. Since no matrix of order five in $\SL(2,4)$ acting on its natural module has 1 as an
eigenvalue it follows that $\C_G(g)\cap N=1$. Thus $\C_G(g)\cong \C_G(g)N/N\leqslant \C_{G/N}(gN)\cong
\C_H(g)=\langle g\rangle$. Hence $\C_G(g)=\langle g\rangle$ and $\langle g\rangle \backslash\{1\}$ is a
connected component of $\Gamma(G)$ isomorphic to $K_4$. Since $\N_G(\langle g\rangle)\cap N$ is a normal
subgroup of $\N_G(\langle g\rangle)$ that is disjoint from $\langle g\rangle$ it must centralise $\langle
g\rangle$ and so $\N_G(\langle g\rangle)\cap N=1$. Thus $\N_G(\langle g\rangle)\cong N\N_G(\langle
g\rangle)/N$, which is isomorphic to a subgroup of $\N_{G/N}(\langle Ng\rangle)\cong \N_H(\langle
g\rangle)\cong D_{10}$. Hence  $\N_G(\langle g\rangle)=\N_H(\langle g\rangle)\cong D_{10}$, which gives
 $\frac{|G|}{|{\mathcal
N}_G(\langle g\rangle)|}=\frac{|G|}{10}$ isolated copies of $K_4$ in $\Gamma$.
 It remains to consider the set
$$\Delta=(N\cup\bigcup_{g_i\in H\atop |g_i|=2} g_i N)\setminus\{1\}$$
which has size $|G|-2|G|/6-4|G|/10-1=16|G|/60-1$
 (note that $A_5$ contains 15
involutions). Since $N$ is abelian, the elements of $N\backslash \{1\}$ form a clique.  Moreover, since
$|N|$ is even, each involution in $H$ centralises a nontrivial element of $N$ and so every element of
$\Delta$ is adjacent to some element of $N\backslash\{1\}$. Hence the elements of $\Delta$ form a single
connected component of diameter at most three.
 To show
that its diameter is at   least three, choose involutions $g_1,g_2\in \N_G(\langle g\rangle)\cong
D_{10}$. Since $\langle g_1,g_2\rangle=\N_G(\langle g\rangle)$ and $\C_N(g)=1$, it follows that $g_1$
and $g_2$ do not centralise the same nontrivial element of $N$. Consequently, if $g_1\sim x\sim g_2$
is a path in $\Gamma(G)$, then $x\notin N$ and as such, $g_1N\sim xN\sim g_2N$ would be a path  in
$\Gamma(G/N)=\Gamma(A_5)$. However, in $\Gamma(A_5)$ the involutions lie in cliques of size three,
contradicting the fact that $\langle g_1N,g_2N\rangle \cong \langle g_1,g_2\rangle \cong D_{10}$ is
nonabelian. Hence $\Delta$ has diameter three.
\end{proof}

We now investigate groups with nontrivial centre. First we define some groups:
\begin{de}\label{def:groups}
\mbox{}
\begin{enumerate}[label={(\roman*)}]
\item Let $J=\langle a,b,c,\gamma\mid a^3=b^3=c^2=abc=\gamma^2, a^\gamma=b\rangle$ be a non-split
    extension of $\SL(2,3)=\langle a,b,c\mid a^3=b^3=c^2=abc\rangle$ by $\langle\gamma\rangle
    \cong\ZZ_4$; This is SmallGroup(48,28) in GAP~\cite{GAP4}. We refer to~\cite[p.
    104--105]{Wong2} for a realization  of $J$ as the subgroup of  semilinear transformations
    $\Gamma\mathrm{L}(2,9)$.
\item Let $D_{2n}=\langle a,b \mid a^n=b^2=1, a^b=a^{-1} \rangle $  for $n\geq 2$ be a dihedral group.
    We remark that $D_4\cong \ZZ_2\times \ZZ_2$;
\item Let $SD_{2n}=\langle a,b\mid a^{n}=b^2=1,a^b=a^{ n/2-1} \rangle$ for $n=2^k\geq 8$ be a
    semidihedral group,  and
\item Let $Q_{4n}=\langle a,b \mid a^{2n} = b^4 = 1, a^n = b^2, a^b = a^{-1}\rangle$ for $n\geq 2$
    be a generalised  quaternion group.
\end{enumerate}
\end{de}

\begin{lemma}\label{lem:edgenontrivcent}
Let $G$ be a finite group with nontrivial centre whose commuting graph contains an isolated edge
$\{v,w\}$. Then one of the following holds:
\begin{enumerate}[label=\emph{(\roman*)}]
 \item $G\cong\SL(2,3)$ or $G\cong \SL(2,5)$, or
 \item $G$ has an abelian normal subgroup $N$ of odd order with $G/N\cong
     \GL(2,3)$ or $J$, the preimage of $\SL(2,3)$ in $G$ centralises $N$, and $v$ acts on $N$ by inversion, or
\item $G\cong N\rtimes H$ where $N$ is an abelian group of odd order and $H$ is isomorphic to one of
    $\ZZ_4$, $D_4$, $D_8$, $Q_8$, or $D_{2^k}$, $SD_{2^k}$  $Q_{2^k}$ with \mbox{$k\ge 4$}. Furthermore,
    the group induced by the action of $H$ on $N$ via conjugation is $\ZZ_2$ or $\ZZ_2\times \ZZ_2$, and
    $v$ acts on $N$ by inversion.
\end{enumerate}
\end{lemma}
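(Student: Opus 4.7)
My plan is to apply Lemma~\ref{lem:centre-in-case-of-an-edge} to split into cases and then invoke classical classifications of finite groups by $2$-local structure. By that lemma, $|Z(G)|=2$ and either (a) $|v|=4$, $C:=\C_G(v)=\langle v\rangle\cong\ZZ_4$, or (b) $|v|=2$, $C=\langle v,z\rangle\cong\ZZ_2\times\ZZ_2$, $Z(G)=\langle z\rangle$. Since $C$ is self-centralising, a Sylow $2$-subgroup $P\geq C$ contains a self-centralising cyclic subgroup of order $4$ in case (a), or a self-centralising Klein four-group in case (b).

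Setting $N:=O(G)$, I would first show $N$ is abelian and $v$ inverts $N$. Since $|C|$ is a $2$-power and $|N|$ is odd, $C\cap N=1$. In case (a), $v^2\in Z(G)$ centralises $N$ and the $v$-action on $N$ factors through $\langle v\rangle/\langle v^2\rangle$; in case (b), $v$ already has order two. In either case $v$ induces an involutory automorphism of $N$ whose fixed-point set is contained in $C\cap N=1$, hence is fixed-point-free, and the classical fact that such an automorphism forces the group to be abelian and inverted gives the claim.

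Next, I would identify $G/N$. In case (b) the Klein four-group $C$ contains the central involution $z$; the standard classification of finite groups with a self-centralising four-subgroup (via transfer, Glauberman's $Z^*$-theorem, and the Gorenstein-Walter theorem on dihedral Sylow $2$-subgroups) forces $G/N$ to be a $2$-group in $\{D_4,D_8,D_{2^k},SD_{2^k}:k\geq 4\}$, and Schur-Zassenhaus yields the semidirect product $G=N\rtimes H$ of~(iii). In case (a), $P\in\{\ZZ_4,D_{2^k},SD_{2^k},Q_{2^k}\}$; for generalised quaternion or semidihedral $P$, the Brauer-Suzuki theorem with Wong's refinement~\cite{Wong2} and the Alperin-Brauer-Gorenstein analysis give $G/N\in\{\SL(2,3),\SL(2,5),\GL(2,3),J\}$, producing~(i) when $N=1$ and $G\in\{\SL(2,3),\SL(2,5)\}$ and~(ii) otherwise. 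Remaining outcomes, together with $P=\ZZ_4$ handled by Burnside's transfer theorem, give the split decomposition $G=N\rtimes P$ of~(iii).

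The main obstacle is to verify the fine structural statements about the $G$-action on $N$. In~(ii) one must show that the preimage $M$ of $\SL(2,3)\leq G/N$ centralises $N$; since $M$ is generated by $v^2$ (central, hence trivial on $N$) and elements of order $3$, it suffices to rule out a $3$-element $g\in G/N$ acting nontrivially on $N$. For such a $g$, setting $F:=\C_N(g)\lneq N$, one pairs $g$ with an involution $a\in G/N$ that inverts it (such $a$ exist in both $\GL(2,3)$ and $J$) and is $G/N$-conjugate to the image of $v$, so that $a$ acts on $N$ by inversion. Then $\langle g,a\rangle\cong S_3$ acts on the odd-order quotient $N/F$ with $g$ fixed-point-free and $a$ acting by inversion (hence also fixed-point-free), contradicting Corollary~\ref{cor:fixed-pts}. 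For~(iii), an analogous argument shows that the induced $H$-action on $N$ factors through $\ZZ_2$ or $\ZZ_2\times\ZZ_2$, by observing that $v^2\in Z(H)$ centralises $N$, $v$ inverts $N$, and the remaining involutions of $H$ either invert $N$ or centralise it according to their $H$-conjugacy with $v$.
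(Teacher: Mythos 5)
Your overall architecture matches the paper's: reduce via Lemma~\ref{lem:centre-in-case-of-an-edge} to a self-centralising subgroup $\langle v,w\rangle$ of order $4$, set $N=O(G)$, use $\C_G(v)\cap N=1$ to get a fixed-point-free involutory action of $v$ on $N$ (hence $N$ abelian and inverted), and then identify $G/N$. The difficulty is entirely in that last identification, and this is where your argument breaks.

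The paper notes that $Z(G)\cap N=1$ forces $Z(G/N)\neq 1$ and then quotes Wong's classification \cite{Wong} of finite groups with a self-centralising subgroup of order $4$, applied to $G/N$, which delivers the full list $\ZZ_4$, $D_4$, $D_8$, $Q_8$, $\SL(2,3)$, $\SL(2,5)$, $\GL(2,3)$, $J$, $D_{2^k}$, $SD_{2^k}$, $Q_{2^k}$ in one step. You instead split according to whether $\C_G(v)$ is cyclic or Klein and assign quotients to each branch, and the assignment is wrong. Concretely, $\GL(2,3)$ contains the diagonal subgroup $\{\mathrm{diag}(\pm1,\pm1)\}$, a self-centralising Klein four-group containing the central involution $-I$, while every element of order $4$ in $\GL(2,3)$ lies in the quaternion subgroup of $\SL(2,3)$ and has centraliser of order $8$; so $\GL(2,3)$ occurs only in your case (b), never in case (a). Your case (b) claim that ``$G/N$ is a $2$-group in $\{D_4,D_8,D_{2^k},SD_{2^k}\}$'' is therefore false, and the tools you invoke cannot establish it: the self-centralising four-group need not be a Sylow $2$-subgroup (in $\GL(2,3)$ the Sylow $2$-subgroup is $SD_{16}$), so Gorenstein--Walter on dihedral Sylow $2$-subgroups does not apply. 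Following your case (b) verbatim would wrongly exclude $G=\GL(2,3)$ and all the groups of conclusion (ii) whose isolated edge comes from a Klein four-group. Two further, smaller problems: in your argument that the preimage of $\SL(2,3)$ centralises $N$, you pair a $3$-element with an \emph{involution} of $G/N$ conjugate to the image of $v$, but $J$ has a unique involution and it is central (in the $J$-case $v$ has order $4$), so no such involution exists; and the step $\C_{N/F}(g)=\C_N(g)F/F$ needs coprimality, which fails if $3$ divides $|N|$ (nothing in (ii) excludes this). The paper avoids both issues by a purely normal-subgroup argument: inversion is central in $\Aut(N)$, so the kernel $M$ of $G\to\Aut(N)$ sits in a chain $N<NZ(G)\leq M<R\leq G$ of normal subgroups with $R/M=Z(G/M)\neq1$, and the normal structure of $S_4\cong \GL(2,3)/Z$ then forces $M/N\cong\SL(2,3)$. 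Your sketch for the action in (iii) likewise only treats involutions of $H$ and says nothing about the cyclic maximal subgroup $\langle a\rangle$; the paper pins this down by noting that $(v)\phi$ is inversion, hence commutes with $(a)\phi$, while $a^v=a^{-1}$ or $a^{2^{k-2}-1}$, forcing $(a^2)\phi=1$.
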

\begin{proof}
Let $\{v,w\}$ be an isolated edge in the commuting graph $\Gamma(G)$. By Lemma
\ref{lem:centre-in-case-of-an-edge}, $|Z(G)|=2$ and either $|v|=4$ with $w=v^3$ and $Z(G)=\langle
v^2\rangle$, or $|v|=2=|w|=|vw|$ with $Z(G)=\langle vw\rangle$. Thus, $\C_G(v)=\C_G(w)=\C_G(\langle
v,w\rangle)=\langle v,w\rangle$ and $\langle v,w\rangle$ is a self-centralising subgroup of $G$ of order
four.
Let $N$ be the largest odd order normal subgroup of $G$. Since
$|Z(G)|=2$ we have $Z(G)\cap N=1$ and
so $Z(G/N)\geq 2$. Thus by  Wong \cite[Theorems 1 and 2]{Wong}, $G/N$ is isomorphic to one of $\ZZ_4$,
$D_4$, $D_8$, $Q_8$, $\SL(2,3)$, $\SL(2,5)$, $\GL(2,3)$,  $J$, or $D_{2^k}$, $SD_{2^k}$ or $Q_{2^k}$ for
some $k\ge 4$.

Since $|\C_G(v)|=4$ while $|N|$ is odd we have that $\C_G(v)\cap N=1$. Combined with $v^2\in Z(G)$, it
follows that $v$ induces a fixed-point-free automorphism of $N$ of order two. This implies (see
\cite[Theorem 1.4, p.~336]{Gorenstein}) that $N$ is  abelian and $v$ acts on $N$ by inversion.

Suppose that $N\neq 1$ and let $\phi\colon G\rightarrow \Aut(N)$ be the homomorphism induced by the action
of $G$ on $N$ by conjugation. Let  $M=\ker \phi$.    Then $\ZZ_2\cong Z(G)\leqslant M$ and $Z(G)\cap N=1$.
Thus $N<M\norml G$ and  $M/N$ contains a central subgroup of $G/N$ or order two. Moreover,
$v\notin\ker(\phi)$ and since  $v\phi$ is inversion we have that $1\neq v\phi\in Z((G)\phi)\cong Z(G/M)$. In
particular, letting $R$ be the preimage of $Z((G)\phi)$ in $G$ we have the chain of subgroups
$$N<NZ(G)\leqslant M<R\leqslant G$$
each normal in $G$, with $R/M=Z(G/M)\neq 1$ and $NZ(G)/N$  being central in $G/N$ and of order two. This is
impossible when $G/N\cong \SL(2,3)$ or $\SL(2,5)$. Thus, if  $G/N\cong \SL(2,3)$ or $\SL(2,5)$ then $N=1$
and (i) holds.

Assume next that $G/N\cong\GL(2,3)$ or $J$. Since $\GL(2,3)$ and $J$ have a unique normal subgroup of order
two we must have $G/NZ(G)\cong S_4$. Using the normal structure of $S_4$ and the fact that $R/M$ is a
nontrivial central subgroup of $G/M$, it follows that $R=G$ and $M/NZ(G)\cong A_4$. In particular, $M/N\cong
\SL(2,3)$ and by definition $M$ centralises $N$. Thus $(ii)$ holds.

Suppose finally that $G/N\cong\ZZ_4$, $D_4$, $D_8$, $Q_8$, or $D_{2^k}$, $SD_{2^k}$ or $Q_{2^k}$ with $k\ge
4$. Since $N$ is odd, Sylow's Theorems imply that $G=N\rtimes H$ for some Sylow 2-subgroup $H$ of $G$
containing $v$. Clearly, $H\cong G/N$. Now $v$ acts on $N$ by inversion and by
Lemma~\ref{lem:centre-in-case-of-an-edge}, $|\C_G(v)|=4$. Thus if $H\cong \ZZ_4$ or $D_4$, then $H=\C_G(v)$
and $(H)\phi=\ZZ_2$ as $Z(G)\leqslant H$ and acts trivially on $N$. In the rest of the cases, take standard
generators $a$ and $b$ for $H$ as given in Definition \ref{def:groups}, with $b$ having order four when
$H\cong Q_{2^k}$ and order two otherwise. Suppose that $|H| \ge 16$. Since $|\C_G(v)|=4$ it follows that
$v=a^ib$ for some integer $i$. Moreover, as $a^v=a^{-1}$ or $a^{2^{k-2}-1}$ but the inversion map $(v)\phi$
commutes with $(a)\phi$, it follows that $(a^2)\phi=1$. Hence $(H)\phi=\ZZ_2$ or $\ZZ_2\times\ZZ_2$.  The
same argument holds when $|H|=8$ and $v=a^ib$. Thus it remains to consider the case where $H=D_8$ or $Q_8$,
and $v=a$. However, since $v^b=v^{-1}$ and $(v)\phi$ commutes with $(b)\phi$ it once again follows that
$(a^2)\phi=1$. Thus (iii) holds.
\end{proof}

We now determine the graphs that arise from the groups listed in  Lemma \ref{lem:edgenontrivcent}.
Commuting graphs of dihedral groups and generalised quaternion groups were studied by \cite{dihedral}.

\begin{theorem}\label{thm:edge-center}
Let  $\Gamma$ be the commuting graph of a group $G$ with nontrivial centre and suppose that $\Gamma$
has an isolated edge. Then one of the following holds:
\begin{enumerate}[label=\emph{(\roman*)}]
  \item\label{sl23}$G\cong \SL(2,3)$ and  $\Gamma(G)=3K_2+4K_4$,
\item\label{sl25} $G\cong \SL(2,5)$ and $\Gamma(G)=15 K_2+10K_4+6K_8$,
\item\label{gl23}$G\cong \GL(2,3)$ or $J$, and  $\Gamma(G)=6K_2+4K_4+3K_6$,
\item\label{G8} $G\cong Q_8$ or $D_8$ and $\Gamma(G)=3K_2$.
\item\label{gendih}$G\cong D_{2^k}$, $SD_{2^k}$ or $Q_{2^k}$ with $k\ge 4$, and   $\Gamma(G)= 2^{k-2}
    K_2+ K_{2^{k-1}-2}$

\item $G$ is as in part \emph{(ii)} of Lemma \ref{lem:edgenontrivcent} with $N\neq 1$ and
    $\Gamma(G)=\linebreak\left(\frac{|V(\Gamma)|+2}{8}\right)K_2 +\Delta$, with $\Delta$  connected of
     diameter four.

\item\label{zz4}\label{phieq2}\label{phieq22} $G$ is as in part \emph{(iii)} of Lemma \ref{lem:edgenontrivcent} with $N\neq 1$ and either
  \begin{enumerate}
     \item[\emph{(a)}] $\Gamma(G)=\left(\frac{|V(\Gamma)|+2}{4}\right)K_2+K_k$ where $k=(|V(\Gamma)|-2)/2$, or
     \item[\emph{(b)}] $\Gamma(G)=\frac{|V(\Gamma)| +2}{8} K_2+\Delta$, with  $\Delta$ connected of diameter at most three.
  \end{enumerate}
\end{enumerate}
\end{theorem}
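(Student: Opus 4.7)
The overall plan is to exploit that $|Z(G)|=2$ so, by Lemma~\ref{lem:lexico}, $\Gamma(G)=\Gamma_1[K_2]$, where $\Gamma_1$ is the subgraph of $\Gamma(G)$ induced on coset representatives of $Z(G)$. Under this lexicographic product, an isolated vertex of $\Gamma_1$ becomes an isolated edge of $\Gamma(G)$, a $K_r$-component of $\Gamma_1$ becomes a $K_{2r}$-component, and a connected component of $\Gamma_1$ of diameter $d\ge 2$ lifts to a component of $\Gamma(G)$ of the same diameter. Hence it suffices to describe $\Gamma_1$ in each case of Lemma~\ref{lem:edgenontrivcent}.

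For parts (i)--(v) the groups are named explicitly, so the analysis is by direct centraliser computation. In $\SL(2,3)$ and $\SL(2,5)$, the Sylow $2$-subgroup is $Q_8$ and its three cyclic $\ZZ_4$ subgroups pairwise fail to commute, contributing the isolated vertices of $\Gamma_1$; each Sylow $3$-subgroup together with $Z(G)$ gives a $\ZZ_6$, contributing an edge of $\Gamma_1$, while each Sylow $5$-subgroup of $\SL(2,5)$ together with $Z(G)$ gives a $\ZZ_{10}$, contributing a $K_4$. The groups $\GL(2,3)$ and $J$ both contain $\SL(2,3)$ with index two and are treated by extending this analysis to the outer coset. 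For $Q_8$ and $D_8$ the graph has only six vertices and the claim is immediate. For $D_{2^k}$, $SD_{2^k}$, $Q_{2^k}$ with $k\ge 4$, working from the presentations in Definition~\ref{def:groups}, the cyclic index-$2$ subgroup $\langle a\rangle$ yields the clique $K_{2^{k-1}-2}$, and each element $a^ib$ has centraliser $\langle a^{2^{k-2}},a^ib\rangle$ of order four, grouping the $2^{k-1}$ such elements into $2^{k-2}$ isolated pairs.

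For case (vi), the structural observation is that $Z(M)/N\le Z(M/N)=Z(\SL(2,3))\cong\ZZ_2$, while $Z(G)\le Z(M)$ and $Z(G)\cap N=1$, forcing $Z(M)=N\cdot Z(G)$. Thus the $2|N|-2$ noncentral elements of $N\cdot Z(G)$ commute with all of $M$, acting as a hub; every other noncentral element of $M$ commutes with these and lies in the same component. By Lemma~\ref{lem:centre-in-case-of-an-edge}, isolated edges of $\Gamma(G)$ are precisely $\{v',zv'\}$ for $v'$ conjugate to $v$; counting via $|N_G(\langle v,z\rangle)|=8$ gives $|G|/8$ isolated edges. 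The remaining elements of $G\setminus M$ centralise some nontrivial element of $N$ via Corollary~\ref{cor:fixed-pts} applied to $S_3$-subgroups of $G/N\cong\GL(2,3)$ or $J$, so they join the main component~$\Delta$.

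Case (vii) splits by whether $(H)\phi\cong\ZZ_2$ or $\ZZ_2\times\ZZ_2$. If $(H)\phi\cong\ZZ_2$, then $K=\ker\phi$ has index $2$ in $H$ and contains $Z(G)$; $NK$ is abelian with $|NK|=|G|/2$, and its noncentral elements form the clique $K_{(|V(\Gamma)|-2)/2}$, while each element of $G\setminus NK$ has a self-centralising order-four subgroup, producing $|G|/4$ isolated edges. If $(H)\phi\cong\ZZ_2\times\ZZ_2$, then $K$ has index $4$; among the three nontrivial cosets of $K$ in $H$, only the coset containing $v$ (whose image is inversion) produces isolated edges, again $|G|/8$ of them, while elements of the other two cosets each centralise a proper nontrivial subgroup of $N$ (the fixed points of the corresponding non-inversion involution) and merge with the clique $NK\setminus Z(G)$ to form a connected component~$\Delta$ of diameter at most~$3$. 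The main obstacle throughout parts (vi) and (vii) is verifying the diameter of $\Delta$ exactly: one exhibits pairs of vertices attaining the claimed distance, and one rules out shorter paths by invoking the fixed-point-freeness of the inversion action of $v$ on $N$, which forces certain prospective intermediate vertices to lie outside the required centraliser.
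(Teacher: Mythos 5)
Your reduction via Lemma \ref{lem:lexico} to $\Gamma(G)=\Gamma_1[K_2]$ and the centraliser computations for the named groups in (i)--(v) are fine, but the arguments for cases (vi) and (vii) each contain a genuine gap. In case (vi) you claim that the elements of $G\setminus M$ not lying in an isolated edge ``centralise some nontrivial element of $N$'' via Corollary \ref{cor:fixed-pts}. This is false: since the preimage $M$ of $\SL(2,3)$ is the kernel of $\phi\colon G\to\Aut(N)$ and $v\phi$ is inversion, the image $(G)\phi\cong G/M$ has order two, so \emph{every} element of $G\setminus M$ acts on the odd-order group $N$ by inversion, hence fixed-point-freely; in particular any $S_3\leqslant G/N$ maps onto a subgroup of $\Aut(N)$ of order at most $2$, so Corollary \ref{cor:fixed-pts} gives no information here. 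Worse, were your claim true, every vertex of $\Delta$ would be at distance one from the clique on $N\setminus\{1\}$ and $\Delta$ would have diameter at most three, contradicting the diameter four you are supposed to establish. The actual mechanism is different: an element $g\in G\setminus M$ not in an isolated edge has order $8$ with $g^2\in M\setminus Z(G)$, so it attaches to $\Delta$ via the edge $g\sim g^2$; the lower bound on the diameter then requires the separate argument that two such elements generating distinct self-centralising subgroups of order eight modulo $N$ admit no path of length three.

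In case (vii) your dichotomy ``$(H)\phi\cong\ZZ_2$ gives (a), $(H)\phi\cong\ZZ_2\times\ZZ_2$ gives (b)'' does not match the outcomes, because an index-two kernel $M_1=\ker\phi\cap H$ need not be abelian and the elements acting by inversion need not be isolated. Concretely, for $H\cong D_{2^k}$, $SD_{2^k}$ or $Q_{2^k}$ with $k\ge 4$ and $M_1=\langle a^2,av\rangle$, one has $(H)\phi\cong\ZZ_2$, yet $M_1$ is nonabelian (so $NM_1\setminus\langle z\rangle$ is not a clique) and the elements of $Na^i$ with $i$ odd act on $N$ by inversion but commute with $a^2\notin Z(G)$, so they do not lie in isolated edges; the outcome is (b) with $|G|/8$ isolated edges rather than (a) with $|G|/4$. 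The case split has to be made on the possible kernels $M_1$ (namely $\langle a\rangle$, $\langle a^2\rangle$, $\langle a^2,av\rangle$, and for $|H|=8$ also $\langle a^2,b\rangle$ and $\langle a^2,ab\rangle$), not merely on the isomorphism type of $(H)\phi$.
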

\begin{proof}
 Let $\{v,w\}$ be an isolated edge. By Lemma \ref{lem:centre-in-case-of-an-edge}, $|Z(G)|=2$ and  $\langle v,w\rangle\cong \ZZ_4$ or $\ZZ_2\times\ZZ_2$. The possibilities for $G$ are given in Lemma \ref{lem:edgenontrivcent}. If $G\cong \SL(2,3)$ we have case \emph{\ref{sl23}}, if  $G=\SL(2,5)$ we have case \emph{\ref{sl25}}, while if $G=\GL(2,3)$ or $J$ we have case \emph{\ref{gl23}}.

To continue, suppose first that $G/N\cong \GL(2,3)$ or $J$, where $N$ is a nontrivial abelian normal
subgroup of odd order. We will use various properties of $\GL(2,3)$ and $J$ that can be verified using
\textsc{Magma} \cite{magma} or GAP \cite{GAP4}. By Lemma \ref{lem:edgenontrivcent}, the preimage $H$ of
$\SL(2,3)$ in $G$ centralises $N$ and $v$ acts on $N$ by inversion. Since $|Z(G)\cap \langle v,w\rangle|=2$,
any element of $G$ that normalises $\langle v,w\rangle$ either centralises $\langle v,w\rangle$ or
interchanges $v$ and $w$. Thus $|\N_G(\langle v,w\rangle)|=4$ or 8.  A Sylow 2-subgroup of $G$ is isomorphic
to a Sylow 2-subgroup  of $G/N$, and so looking in $\GL(2,3)$ or $J$, we see that $|\N_G(\langle
v,w\rangle)|=8$. Moreover, in $\GL(2,3)$ and $J$ all self-centralising subgroups of order 4 are conjugate
and so $G$  has only one conjugacy class of self-centralising subgroups of order 4. Thus there are $|G|/8$
isolated edges in $\Gamma$. Since $N$ is abelian, the elements of $NZ(G)\backslash Z(G)$ form a clique and
each element of $H\backslash NZ(G)$ is adjacent to each element of $N$. As $H/N\cong \SL(2,3)$ is nonabelian
the graph induced on $H\backslash Z(G)$ is of diameter two and contains $|G|/2-2$ vertices. If
$g\in\GL(2,3)\backslash\SL(2,3)$ (respectively $J\backslash \SL(2,3)$) does not lie in a self-centralising
subgroup of order $4$ then $g$ has order $8$, $g^2\in \SL(2,3)$ and $\C_{\GL(2,3)}(g^2)=\langle g\rangle $
(respectively $\C_J(g^2)=\langle g\rangle$).  Hence, given two elements $g_1,g_2\in G\backslash H$ that are
not in an isolated edge we have that $g_1^2,g_2^2\in H\backslash Z(G)$ and so for arbitrary $n\in
N\backslash \{1\}$ we have that $g_1\sim g_1^2\sim n\sim g_2^2\sim g_2$ is a path of length four
in~$\Gamma$. Choose $g_1,g_2$  so that $\langle g_1N\rangle$, $\langle g_2N\rangle$ are distinct
self-centralizing subgroups of order eight in $G/N$ whose intersection is $Z(G/N)$.  If  $g_1\sim a\sim
b\sim g_2$ were a path of length three in $\Gamma$ then $g_1N\sim aN\sim bN \sim g_2N$  is a path in the
commuting graph of  $G/N$ where we now allow elements in the centre. Since $\langle g_1N\rangle$ and
$\langle g_2N\rangle$ are self-centralising in $G/N$ it follows that $aN\in\langle g_1N\rangle$ and
$bN\in\langle g_2N\rangle$. Since the only elements of $\langle g_1N\rangle$ which commute with elements not
in $\langle g_1N\rangle$ are those in $Z(G/N)$ it follows that $aN\in Z(G/N)$, contradicting the fact that
$g_1\in G\backslash H=vH$ acts on $N$ as a fixed point free inversion. Thus the set of vertices of $\Gamma$
not in an isolated  edge forms a connected subgraph of diameter four and $\Gamma$ is as in part (vi).

Next suppose  that $G=N\rtimes \ZZ_4$ or $N\rtimes D_4$, with $N$ a nontrivial abelian group of odd order.
Since $|Z(G)\cap \langle v,w\rangle|=2$, any element of $G$ that normalises $\langle v,w\rangle$ either
centralises $\langle v,w\rangle$ or interchanges $v$ and $w$. Since $\langle v,w\rangle$ is
self-centralising and is a  Sylow 2-subgroup of $G$ it follows that $\N_G(\langle v,w\rangle)=\langle
v,w\rangle$ and there are $|G|/4=(|V(\Gamma)|+2)/4$ isolated edges in $\Gamma$. This covers $|G|/2$ of the
vertices in $G$ and consists of all elements not in the index two  normal subgroup $NZ(G)$. Since $N$ is abelian, so is
$NZ(G)$ and hence the vertices not in an isolated edge form a clique of size $(|V(\Gamma)|-2)/2$.
 This gives case (vii)(a).

Next suppose that $G=N\rtimes H$ with $H\cong D_{2^k}$,  $SD_{2^k}$ or $Q_{2^k}$ and $k\geq 3$ (if $H\cong
SD_{2^k}$ then $k\ge 4$). Take standard generators $a$ and $b$ for $H$ as in Definition \ref{def:groups}.
 Let
$z=a^{2^{k-2}}$ such that $Z(G)=\langle z\rangle$.
 If $N=1$ and $k=3$ then $G\cong D_8$ or $Q_8$ and $\Gamma(G)=3K_2$. This is case \emph{\ref{G8}}. If $N=1$ and $k\geq 4$, then  the elements of $\langle a\rangle\backslash \langle z\rangle$ form a clique of size $2^{k-1}-2$. Elements of the form $a^ib$ have order two when $H$ is dihedral and order four when $H$ is quaternion. When $H$ is semidihedral $a^ib$ has order two when $i$ is even and order four when $i$ is odd. In all three cases, $\C_H(a^ib)=\langle a^ib,z\rangle$ has order four.
 Thus $\Gamma(G)$ is as in part \emph{\ref{gendih}}.

Suppose now that $N\neq 1$. By Sylow's Theorems we may assume that $v\in H$, and by Lemma
\ref{lem:edgenontrivcent} we have that $\langle a^2\rangle$ centralises $N$. Hence $(N\times \langle
a^2\rangle) \backslash\langle z\rangle$ is a clique of size $|N|2^{k-2}-2$. Since $v$ acts on $N$ by
inversion and $|\C_G(v)|= 4$ we have for $|H|>8$ that $v\notin \langle a \rangle$ and so the kernel $M_1$ of
the action of $H$ on $N$ is either $\langle a^2\rangle$, $\langle a\rangle$ or $\langle a^2,av\rangle$. For
$|H|=8$ it is possible to have $v=a$ or $a^3$, in which case $\langle a^2,b\rangle$ and $\langle
a^2,ab\rangle$ are also possibilities for $M_1$.

Suppose first that $M_1=\langle a\rangle$. Then $N\times \langle a\rangle$ is an abelian group and so
$(N\times \langle a\rangle)\backslash \langle z\rangle$ is a clique of size $|G|/2-2$. For $g\in
G\backslash (N\times \langle a\rangle)$ we have that $g$ induces inversion on $N$ and conjugates $a$
to $a^{-1}$ or $a^{2^{k-2}-1}$. Thus $\C_G(g)\cong \langle g,z\rangle$ and so $\Gamma(G)$ contains
precisely $|G|/4$ isolated edges. Hence we have case \emph{\ref{phieq2}}(a).

Next suppose that $M_1=\langle a^2\rangle$. Since $v$ acts on $N$ by inversion, each of the $|H|/2$ elements
of $H\backslash \langle a^2,v\rangle$ induces an automorphism of $N$ of order 2 that is not inversion (see
(iii) of Lemma~\ref{lem:edgenontrivcent}). Hence it  centralises a nontrivial element of the abelian group $N$.
Therefore, as $N\ne1$  the elements from $[N\times\big(\langle a^2\rangle \cup (H\backslash \langle
a^2,v\rangle)\big)]\setminus\langle z\rangle$ form a connected component $\Delta$ of diameter at most three
on $\frac{|G|}{4}+\frac{|G|}{2}-2$ vertices. In addition,
  $|\N_G(\langle v,z\rangle)|=8$ implies there  are $|G|/8$ isolated edges
in $\Gamma(G)$ conjugate to $\{v,w\}$ and $\Gamma(G)=\frac{|G|}{8}K_2+\Delta$ where $\Delta$ is a
connected graph of diameter at most three. Thus we have case \emph{\ref{phieq22}}(b).

Next suppose that $M_1=\langle a^2,av\rangle$. Then, $(N\times M_1)\backslash \langle z\rangle$ has diameter
at most two.  Also, we may assume $v=b$. Moreover, the $|G|/4$ conjugates of $v$ provide $|G|/8$ isolated
edges. The elements of $Na^i$ for each  odd $i$ act on $N$ by inversion. For $|H|>8$ these elements commute
with $a^2\notin Z(G)$ and so $\Gamma(G)=\frac{|G|}{8}K_2+\Delta$ where $\Delta$ is a connected graph of
diameter two as in \emph{\ref{phieq22}}(b). When $|H|=8$ such elements provide another $|G|/8$ isolated
edges and so $\Gamma(G)=\frac{|G|}{4}K_2+K_{|G|/2-2}$ as in \emph{\ref{phieq2}}(a).

Finally suppose that $|H|=8$, $v=a$ or $a^3$, and $M_1=\langle a^2,b\rangle$ or $\langle a^2,ab\rangle$.
Then elements of $G\backslash (N\times M_1)$ act on $N$ by inversion and we obtain $|G|/4$  isolated edges.
Moreover, $(N\times M_1)\backslash \langle z\rangle$ is a clique since $M_1$ is abelian so we are in case
\emph{\ref{phieq2}}(a).
\end{proof}

\end{document}